\newcommand{\R}{{\mathbb R}}
\newcommand{\N}{{\mathbb N}}
\newcommand{\Np}{N^{1,p}}
\newcommand{\pip}{\varphi}
\newcommand{\eps}{\varepsilon}
\newcommand{\dmu}{d\mu}
\newcommand{\mube}{\mu_\beta}
\DeclareMathOperator{\diam}{diam}
\DeclareMathOperator{\dist}{dist}
\DeclareMathOperator{\rad}{\operatorname{rad}}
\DeclareMathOperator{\Cp}{Cap}
\DeclareMathOperator{\cp}{cap}
\def\vint{\mathop{\mathchoice%
          {\setbox0\hbox{$\displaystyle\intop$}\kern 0.22\wd0%
           \vcenter{\hrule width 0.6\wd0}\kern -0.82\wd0}%
          {\setbox0\hbox{$\textstyle\intop$}\kern 0.2\wd0%
           \vcenter{\hrule width 0.6\wd0}\kern -0.8\wd0}%
          {\setbox0\hbox{$\scriptstyle\intop$}\kern 0.2\wd0%
           \vcenter{\hrule width 0.6\wd0}\kern -0.8\wd0}%
          {\setbox0\hbox{$\scriptscriptstyle\intop$}\kern 0.2\wd0%
           \vcenter{\hrule width 0.6\wd0}\kern -0.8\wd0}}%
          \mathopen{}\int}
\theoremstyle{plain}
\newtheorem{theorem}{Theorem}[section]
\newtheorem{proposition}[theorem]{Proposition}
\theoremstyle{definition}
\newtheorem{definition}[theorem]{Definition}
\newtheorem{remark}[theorem]{Remark}
\def\@biblabel#1{#1.}
\let\Thebibliography=\thebibliography
\renewcommand{\thebibliography}[1]{\def\@mkboth##1##2{}\Thebibliography{#1}
\addcontentsline{toc}{section}{References}
\frenchspacing 
\setlength{\@topsep}{0pt}
\setlength{\itemsep}{0pt}%
\setlength{\parskip}{0pt plus 2pt}%
}
\newcommand{\art}[6]{{\sc #1, \rm #2, \it #3 \bf #4 \rm (#5), \mbox{#6}.}}
\newcommand{\book}[3]{{\sc #1, \it #2, \rm #3.}}
\newcommand{\artprep}[3]{{\sc #1, \rm #2, \rm #3.}}
\numberwithin{equation}{section}
\title{Potential theory and quasisymmetric maps between
compact Ahlfors regular
metric measure spaces via Besov functions: preliminary}
\author{Juha Lehrb\"ack, Nageswari Shanmugalingam\footnote{The authors thank Jeff Lindquist for
valuable discussions. 
Part of the work for this paper was conducted during the time the second author was in residence at the 
Mathematical Sciences Research Institute in Berkeley, California
as a Chern Visiting Professor,
during Spring 2022 as part of the program {\it Analysis and Geometry on Random Spaces}, which is 
funded by the NSF grant No.~1440140. She wishes to thank that august institution for its kind hospitality.
The second author was partially supported by 
the grant~DMS\#1800161 and~DMS\#2054960 of the National Science Foundation.}}
\begin{document}
\maketitle

\begin{center}
\emph{Dedicated to Professor Vladimir Maz'ya\\ for his ground-breaking contributions to potential theory.}
\end{center}

\begin{abstract}
We study Besov capacities in a compact Ahlfors regular metric measure space by means of
hyperbolic fillings of the space. This approach is applicable even if the
space does not support any Poincar\'e inequalities.
As an application of the Besov capacity estimates we show that if
a homeomorphism between two Ahlfors regular metric measure spaces 
preserves, under some additional assumptions, certain Besov classes, 
then the homeomorphism is necessarily a quasisymmetric map.
\end{abstract}

\section{Introduction}

The study of potential theory is usually directed towards Sobolev spaces of functions on Riemannian manifolds, and more
recently, Newton-Sobolev spaces of functions on complete doubling metric measure spaces supporting a Poincar\'e 
inequality. These Sobolev-type spaces of functions are associated with a gradient structure, with weak (distributional)
derivatives in the Riemannian case and minimal weak upper gradients in the metric measure space case. Such gradients
have the property that if $f$ is a function in the Sobolev-type class and $f$ is constant on an open subset of the metric
space, then the norm of the weak derivative (in the Riemannian setting) and the minimal weak upper gradient (in the
metric setting) are zero almost everywhere in that open set. In the language of Dirichlet forms and Markov processes,
this property is called \emph{strongly local} property
of the energy associated with the Sobolev classes. Tools used to study potential theory related to Sobolev spaces
include locality together with the doubling property of the measure and the Poincar\'e inequality. 
The books~\cite{AH, Maz0},
and especially~\cite[Sections~10.4.1, 13.1.2]{Maz0}, have an excellent sampling of results in potential theory in the Euclidean 
setting.

However, there are many compact
doubling metric measure spaces 
that do not have sufficient number of non-constant
rectifiable curves in order to support a Poincar\'e inequality. Examples of such spaces include the standard (thin)
Sierpi\'nski carpet and the Sierpi\'nski gasket, Rickman rug, as well as the von Koch snowflake curve~\cite[Proposition~4.5]{BoP}.
In such metric spaces
a more suitable replacement for Sobolev spaces might be Besov spaces. Unfortunately (or fortunately, depending on the
perspective) the energy associated with the Besov spaces are 
not local. In this paper we use the tools of 
hyperbolic filling and lifting of measures to that hyperbolic filling as developed in~\cite{B2S} to 
study potential theory associated 
with Besov function spaces on compact 
metric measure spaces.
We establish Besov capacitary estimates for 
various configurations of pairs of subsets of 
under the assumption that the measure is
Ahlfors $Q$-regular; see Subsection~\ref{SubSec:PI} for the definition.
A discussion regarding recent developments connecting Besov spaces of functions in Euclidean spaces
and Sobolev spaces can also be found in~\cite[Sections~10.3, 10.5]{Maz0}.

The results in this note are motivated by the study in~\cite{KKSS, KYZ}. 
The results of~\cite{KKSS} use a
characterization of Besov spaces via 
scaled Haj\l asz-type gradients from~\cite{GKZ}. Our motivation is two-fold; first, to provide an alternate proof of
the potential theoretic results in~\cite[Lemma~3.3 and Lemma~3.4]{KKSS} using the new perspective of 
hyperbolic filling that enable us to avoid the scaled Haj\l asz-type method and directly handle the 
Besov norm as in~\eqref{eq:def-Besov}, and second, to extend these capacitary estimates to spaces where the measure
is Ahlfors regular but may not support any Poincar\'e inequality. 
As an application of the capacitary estimates,
we extend at the end of this note the discussion relating Besov space preservation property and
qusiconformal maps, given in~\cite{KKSS} for Ahlfors regular spaces supporting a Poincar\'e inequality, to 
a more general class of Ahlfors regular compact metric measure spaces that may not support a Poincar\'e inequality
but are linearly locally path connected.
However, since we do not assume that the metric spaces support a Poincar\'e inequality, we assume a stronger condition 
on the homeomorphism, namely that it is quasisymmetric. We show that homeomorphisms between two
Ahlfors regular (but not necessarily of the same dimension) compact metric measure spaces that are linearly locally
connected, have the property that if they preserve certain Besov classes under composition, with control over the Besov
norms, then the mapping is necessarily quasisymmetric. This is the content of Theorem~\ref{thm:QS-Besov}.

We do not know whether every quasisymmetric map between two compact Ahlfors regular linearly
locally path connected metric measure spaces with the same regularity dimension preserves 
certain Besov spaces, as lacking knowledge of a suitable Poincar\'e inequality, we do not know that such 
maps preserve measure densities. We point out that a lack of absolute continuity of the pull-back measure 
does not on its own indicate that the quasisymmetric map is not a Besov space morphism, as from~\cite[Proposition~13.3]{B2S} we
know that every Besov function can be modified on a null set to be made quasicontinuous.
This is supported by some preliminary results, because for certain compact spaces such as Cantor sets, some partial
results are known, see~\cite[Section~8]{BBGS}.

\section{Preliminaries}

This section is devoted to describing the background notions used in this note, with the setting 
considered here delineated in Subsection~\ref{subSec:setting}.

\subsection{Newton-Sobolev spaces}\label{SubSec:Newt}

Let $1\le p<\infty$.
When $\Omega$ is an $n$-dimensional 
Euclidean (or a Riemannian) domain and $f\in L^p(\Omega)$, we say that $f$ is in the Sobolev
class $W^{1,p}(\Omega)$ if $f$ has a weak derivative $\nabla f\in L^p(\Omega:\R^n)$. Note that 
if $f$ is of class $C^1(\Omega)$, then for each compact rectifiable curve $\gamma$ in $\Omega$ we have
\begin{equation}\label{eq:up-gr}
|f(y)-f(x)|\le \int_\gamma g\, ds,
\end{equation}
with $g=|\nabla f|$,
where $x$ and $y$ denote the two end points of $\gamma$. However, if $f$ is not of class $C^1(\Omega)$,
a weaker analog of this holds, see~\cite{Vaisala}, namely, there is a family $\Gamma_f$ of compact rectifiable curves
in $\Omega$ such that whenever $\gamma$ is a compact rectifiable curve in $\Omega$ that \emph{does not} belong
to $\Gamma_f$, then~\eqref{eq:up-gr} holds. Moreover, the family $\Gamma_f$ is of $p$-modulus zero, that is, there
is a non-negative Borel measurable function $\rho\in L^p(\Omega)$ such that $\int_\gamma\rho\, ds=\infty$ for each
$\gamma\in\Gamma_f$.

This is the starting point for the theory of Newton-Sobolev functions on
metric measure spaces where weak derivatives do not make sense. 
Let $Y$ be a metric space equipped with a Radon measure $\mu$, and let $f$ be a function on $Y$. We say that
a non-negative Borel measurable function $g$ is a \emph{$p$-weak upper gradient} of $f$ if there is a family $\Gamma$ of
non-constant compact rectifiable curves in $X$ (possibly empty) such that there
is a non-negative Borel measurable function $\rho\in L^p(Y)$ satisfying $\int_\gamma\rho\, ds=\infty$ for each
$\gamma\in\Gamma_f$, and for each non-constant compact rectifiable curve $\gamma$ in $Y$ 
with $\gamma\not\in\Gamma$, the pair $f$ and $g$ 
satisfies~\eqref{eq:up-gr}. We set $\widetilde{N}^{1,p}(Y)$ to be the collection of all functions $f$ such that
$\int_Y|f|^p\, d\mu<\infty$ and $f$ has a $p$-weak upper gradient $g\in L^p(Y)$. Note that we do not ask that
$f\in L^p(Y)$ as elements of $L^p(Y)$ are equivalence classes of functions that agree outside measure-null sets,
but the existence of a weak upper gradient from $L^p(Y)$
may fail if we modify $f$ on a set of measure zero.
The Newton-Sobolev space $N^{1,p}(Y)$ is set to be the collection $\widetilde{N}^{1,p}(Y)/\sim$ of equivalence classes,
where two functions $f_1,f_2\in \widetilde{N}^{1,p}(Y)$ are equivalent, $f_1\sim f_2$, if $\Vert f_1-f_2\Vert_{N^{1,p}(Y)}=0$. Here
\[
\Vert f\Vert_{N^{1,p}(Y)}^p:=\int_X|f|^p\, d\mu+\inf_g\int_Xg^p\, d\mu,
\]
with the infimum taken over all $p$-weak upper gradients $g$ of $f$. For $1\le p<\infty$,
for each $f\in N^{1,p}(Y)$ there is a minimal $p$-weak upper
gradient $g_f\in L^p(Y)$, that has the smallest $L^p$-norm of all $p$-weak upper gradients of $f$. We refer the interested
reader to~\cite{HKST} and~\cite{BBbook} for more on Newton-Sobolev spaces.

\subsection{Poincar\'e inequalities and doubling measures}\label{SubSec:PI}

For $1\le p<\infty$, we say that 
the metric measure space
$(Y,d,\mu)$ supports a $p$-Poincar\'e inequality if there is a constant $C>0$ such that 
\[
\vint_B|f-f_B|\, d\mu\le C\, \rad(B)\, \left(\vint_Bg^p\, d\mu\right)^{1/p}
\]
whenever $g$ is a $p$-weak upper gradient of $f$ in $Y$ and $B$ is a ball in $Y$. 
Here we use the notation 
\[
u_B=\vint_B u\, d\mu=\mu(B)^{-1} \int_B u\, d\mu
\]
for the mean-value integral over $B$.
The validity of a $p$-Poincar\'e inequality
immediately implies that $Y$ is connected. If $\mu$ is in addition doubling, and $Y$ is locally compact, then $Y$ is quasiconvex,
that is, for each $x,z\in Y$ there is a curve $\gamma$ in $Y$ with end points $x, y$ and length $\ell(\gamma)\le C\, d(x,z)$ with
$C$ independent of $x,z$. This quasiconvexity property was first proved in~\cite{Che} in the context of complete
metric measure spaces, but see~\cite[Theorem~3.1]{DJS} for the corresponding proof for locally compact metric measure spaces.

Recall that a Radon measure
$\mu$ is doubling if there is a constant $C_d\ge 1$ such that whenever $y\in Y$ and $r>0$, we have
\[
0< \mu(B(x,2r))\le C_d\, \mu(B(x,r)) <\infty.
\]
We say that $\mu$ is Ahlfors $Q$-regular for some $Q>0$ if there is a constant $C>0$ such that whenever
$x\in Y$ and $0<r<2\diam(Y)$,
\[
\frac{r^Q}{C}\le \mu(B(x,r))\le C\, r^Q.
\]
Note that Ahlfors $Q$-regular measures are comparable to the $Q$-dimensional Hausdorff measure $\mathcal{H}^Q$
(see Subsection~\ref{SubSec:Cap}).

See~\cite{HKST} and~\cite{BBbook} for more details on analysis on doubling metric measure spaces 
supporting Poincar\'e inequalities.

\subsection{Besov spaces}\label{SubSec:Besov}

The primary focus of our note
is on the Besov spaces. These were
initially formulated by O.~V.~Besov in order to study Sobolev extension and restriction theorems
for somewhat smooth Euclidean domains, see for instance~\cite{Besov, Maz0, MazPobTr}.
Let $1\le p<\infty$ and $0<\theta<1$. A function $u\in L^p(Y)$ is said to be in the Besov space $B^\theta_{p,p}(Y)$ if
its \emph{Besov energy}
\begin{equation}\label{eq:def-Besov}
\Vert u\Vert_{B^\theta_{p,p}(Y)}^p:=\int_X\int_X\frac{|u(x)-u(z)|^p}{d(x,z)^{\theta p}\mu(B(x,d(x,z)))}\, d\mu(z)\, d\mu(x)
\end{equation}
is finite. Unlike the Newton-Sobolev functions, an arbitrary perturbation of a Besov function on a set of measure zero gives an equivalent Besov function.

If $(Y,\mu)$ is a doubling metric measure space supporting a $p$-Poincar\'e inequality, then $B^\theta_{p,p}(Y)$ is
obtained via a real interpolation of $N^{1,p}(X)$ with $L^p(X)$, see~\cite{BeSharp} for the Euclidean setting 
and~\cite{GKS} for more on this in the metric setting. 
However, in this note we are
not interested in the interpolation properties connecting Newton-Sobolev spaces to Besov spaces, but in the
trace properties. Jonsson and Wallin studied the trace relationship between Sobolev classes on Euclidean spaces
and  Ahlfors regular compact subsets of the Euclidean spaces, see~\cite{JW80, JW84}.

If $Y$ is a non-complete, locally compact metric measure space, we 
set $\partial Y:=\overline{Y}\setminus Y$. Here $\overline{Y}$ is the metric completion of $Y$, obtained
by considering equivalence classes of Cauchy sequences in $Y$; hence $\partial Y$ consists of equivalence classes
of Cauchy sequences in $Y$ that do not converge in $Y$.
Observe that if $Y$ is locally compact, then necessarily $Y$ is an open subset of $\overline{Y}$. 
Suppose that $\nu$ is a Borel measure on $\partial Y$ and that $\partial Y$ is proper. We say that 
$B^\theta_{p,p}(\partial Y)$ is the trace space of $N^{1,p}(Y)$ if there is a bounded operator
\[
T:N^{1,p}(Y)\to B^\theta_{p,p}(\partial Y)
\]
and a bounded linear extension operator 
\[
E:B^\theta_{p,p}(\partial Y)\to N^{1,p}(Y)
\]
such that 
\begin{enumerate}
\item $Tu=\overline{u}\vert_{\partial Y}$ whenever $u$ is a Lipschitz function on $Y$; here $\overline{u}$ is the unique continuous
extension of $u$ to $\partial Y$,
\item $T\circ E$ is the identity map on $B^\theta_{p,p}(\partial Y)$.
\end{enumerate} 

The subject of traces of Sobolev functions in Euclidean domains dates back to the work of Besov, Gagliardo, Jonsson, and Wallin
\cite{Besov, Besov2, Gag, MazS,  JW80, JW84}. 
The canonical textbook of Maz'ya~\cite{Maz0} contains a nice discussion on traces in Chapter~11, while~\cite{MazPob, MazPobTr}
contain results linking traces of Sobolev spaces to Besov-type spaces in certain Euclidean domains. See also
the text~\cite{MP} for a general treatment of boundary values of Sobolev functions on ``bad" Euclidean domains;
these are merely a few papers on the topic from a vast literature on traces, as we cannot hope to list all papers on the 
topic of traces here.
We refer interested readers to~\cite{GKS, MalyBesov, MSS, B1S} for more on 
Besov spaces as traces of Newton-Sobolev spaces in the metric setting, and to~\cite{Tai} for connections to other
expressions of Besov spaces.

\subsection{Hyperbolic fillings and uniformization}\label{SubSec:HypFill}

Throughout this note, $(Z,d_Z,\nu)$ is a compact doubling  
metric measure space, and without loss of generality we may assume that $0<\diam(Z)<1$.
For $\alpha>1$ and $\tau>1$, we construct a Gromov hyperbolic space $X$ from 
$Z$ as a graph. For each non-negative integer $n$ we set $A_n$ to be a maximal $\alpha^{-n}$ -separated subset of $Z$ such that
$A_n\subset A_{n+1}$ for each $n\in\N_0$. The vertex set of the graph $X$ is the set
$\bigcup_{n\in\N} \{n\}\times A_n$. Two vertices $v=(n,x_n)$ and $w=(m,x_m)$ are neighbors if $v\ne w$, $|n-m|\le 1$,
and $B(x_n,\tau \alpha^{-n})\cap B(x_m,\tau \alpha^{-m})$ is non-empty if $n\ne m$ and 
$B(x_n, \alpha^{-n})\cap B(x_m, \alpha^{-m})$ is non-empty if $n=m$. We consider each pair of neighbors to be connected
with an edge that is an interval of unit length. There is only one vertex $p_0$ corresponding to the level $n=0$, that is,
$\{0\}\times A_0=\{p_0\}$.

Variants of hyperbolic fillings have been constructed in~\cite{BuSch, BP, BSch, BS, B2S, BSS}, but the one described above is from~\cite{B2S}
where it was also shown that $X$ is a Gromov hyperbolic space and that with $\eps=\log(\alpha)$, the uniformization $X_\eps$
of $X$ as in~\cite{BHK} yields a uniform space such that $Z$ is biLipschtiz equivalent to $\partial X_\eps$.  Here, the uniformization
is accomplished via the modified metric $d_\eps$ given by
\[
d_\eps(x,y)=\inf_\gamma \int_\gamma e^{-\eps d(\gamma(t),p_0)}\, ds(t)
\]
with the infimum over all rectifiable curves $\gamma$ in $X$ with end points $x$ and $y$. Recall that $X_\eps$ is a uniform space
if there is a constant $A\ge 1$ such that
for each pair of points $x,y\in X_\eps$ there is a curve $\gamma$ in $X_\eps$ with end points $x$ and $y$ such that 
$\ell_\eps(\gamma)\le A\, d_\eps(x,y)$ and for each $z\in \gamma$, 
\[
\min\{\ell(\gamma_{x,z}), \ell(\gamma_{z,y})\}\le A\, \delta_\eps(z).
\]
Here, $\gamma_{x,z}$ and $\gamma_{z,y}$ denote subcurves of $\gamma$ with end points $x,z$ and $z,y$ respectively, and
\[
\delta_\eps(z):=\dist_{d_\eps}(z,\partial X_\eps):=\inf_{w \in \partial X_{\eps}}d_{\eps}(z, w).
\]
When $Z$ is equipped with a measure $\nu$, we can lift up this measure to a measure
$\mu_+$ on $X$ by setting balls of radius $1$ centered at
vertices $v=(n,x)$ to have measure equal 
to $\nu(B(x,\alpha^{-n}))$. For each $\beta>0$ we can
uniformize this measure to obtain a measure $\mube$ on $X_\eps$ by setting
$d\mube(v)=e^{-\beta d(v,p_0)}\, d\mu_+(v)$. 
This gives us a one-parameter family of lifted measures on $X_\eps$,
first constructed in~\cite{B2S}. 

Recently Clark Butler extended the construction of hyperbolic fillings from compact doubling metric spaces to complete doubling
metric spaces that are unbounded, see~\cite{Bu1, Bu2, Bu3}. It was shown in~\cite{Bu1} that trace and extension theorems
similar to the ones in~\cite{B2S} hold even for the unbounded setting. In this note we focus on compact spaces $Z$,
but point out that with minimal effort the results here can be extended to unbounded complete doubling metric measure 
spaces as well by using the tools of~\cite{Bu1, Bu2}.

\subsection{Uniformized measure $\mu_\beta$ and connection to $\nu$}\label{subSec:setting}

For $\beta>0$ let $\mube$ be the uniformized lift of $\nu$ to $X_\eps$ as constructed
in~\cite{B2S} and described in Subsection~\ref{SubSec:HypFill}. 
From the results in~\cite{B2S} we know that the metric measure space $(X_\eps,d_\eps,\mube)$ is
a doubling metric measure space supporting the best possible Poincar\'e inequality, namely the $1$-Poincar\'e inequality.

There is a relationship between $\nu$ and $\mube$; whenever $z\in Z$ and $0<r\le \diam(Z)$,
we have, by~\cite[Theorem~10.3]{B2S} and the doubling property of $\mube$, that
\begin{equation}\label{eq:co-dim}
\mube(B(z,r))\simeq r^{\beta/\eps} \nu(B(z,r)).
\end{equation}
We treat $\nu$ as a measure on $\overline{X}$, obtained by extending $\nu$ from $Z=\partial X_\eps$ to
$X$ by zero.

\begin{proposition}[{\cite[Theorem~1.1 and Theorem~10.2]{B2S}}]\label{prop:TraceExt}
With the choice of $\alpha$ and $\eps$ as above, the uniformized space $X_\eps$,
equipped with the metric $d_\eps$ and the measure $\mube$, is doubling and supports a $1$-Poincar\'e inequality.
Moreover, for the choice $\theta=1-\beta/(\eps p)$, the Besov space $B^\theta_{p,p}(Z)$ is the trace space of
$N^{1,p}(X_\eps)$.
\end{proposition}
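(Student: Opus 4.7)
The proof naturally splits into two independent halves: (A) the geometric/measure-theoretic assertion that $(X_\eps, d_\eps, \mube)$ is doubling and supports a $1$-Poincar\'e inequality, and (B) the trace identification $B^\theta_{p,p}(Z) = \operatorname{Tr} N^{1,p}(X_\eps)$ with $\theta = 1 - \beta/(\eps p)$. I would attack (A) first because (B) relies on it.

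For (A), the plan is to exploit the fact that the uniformization procedure of Bonk--Heinonen--Koskela turns a Gromov hyperbolic space into a uniform space. The construction of the hyperbolic filling gives precise control of the Gromov product in terms of the levels $n$, and therefore control of $d_\eps$: a vertex $v=(n,x)$ satisfies $\delta_\eps(v) \simeq e^{-\eps n} = \alpha^{-n}$. To check that $\mube$ is doubling on $X_\eps$, I would separately analyze two regimes for a ball $B_\eps(v,r)$: the ``far from the boundary'' regime $r \ll \delta_\eps(v)$, where $\mube$ behaves like a constant multiple (namely $e^{-\beta n}$) of the uniform counting-type measure $\mu_+$ on a combinatorial neighborhood in the tree-like part of $X$, and the ``close to the boundary'' regime $r \gtrsim \delta_\eps(v)$, where $B_\eps(v,r)$ projects onto a shadow in $Z$ of comparable diameter and the estimate reduces to the doubling property of $\nu$ via the shell decomposition $d\mube = e^{-\beta d(\cdot,p_0)} d\mu_+$. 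Summing the geometric series in $n$ yields the claim~\eqref{eq:co-dim} and the doubling bound. The $1$-Poincar\'e inequality is the hard part of (A): the strategy is the standard one for uniform spaces, namely construct, for any two points $x,y \in B_\eps(v,r)$, a uniform curve and use a telescoping chain of balls with radii $\simeq \delta_\eps$ along that curve, estimating differences of averages by $r \vint g\,d\mube$ term by term; summability of the telescope is guaranteed by the uniformity constant and the exponential weight $e^{-\eps d(\cdot,p_0)}$.

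For (B) I would define the trace $T$ via boundary limits. Given $u \in N^{1,p}(X_\eps)$, use vertex averages $u_v := \vint_{B_\eps(v,\delta_\eps(v)/2)} u\,d\mube$ and argue, using the Poincar\'e inequality from (A), that for $\mathcal{H}$-a.e.\ $\xi \in Z$ the averages $u_v$ converge along any sequence of vertices $v_n \to \xi$; define $Tu(\xi)$ as this limit. To show $Tu \in B^\theta_{p,p}(Z)$, for $x,z \in Z$ one chains averages $u_{v_0},u_{v_1},\dots$ along a chain of vertices in $X$ following a hyperbolic geodesic joining $x,z$; each consecutive difference is controlled by $\bigl(\vint g^p \, d\mube\bigr)^{1/p}$ on a nearby ball. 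Substituting this into the Besov integral~\eqref{eq:def-Besov}, interchanging order of summation, and inserting~\eqref{eq:co-dim} converts the double integral over $Z$ weighted by $d(x,z)^{-\theta p}\nu(B(x,d(x,z)))^{-1}$ into the energy integral $\int_{X_\eps} g^p \, d\mube$; matching the powers of the radius $r = \alpha^{-n}$ forces exactly the exponent $\theta = 1 - \beta/(\eps p)$.

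The extension $E$ goes in the opposite direction: given $f \in B^\theta_{p,p}(Z)$ set $Ef(v) := \vint_{B(x,\alpha^{-n})} f\,d\nu$ on each vertex $v=(n,x)$ and extend piecewise linearly along edges of $X$. A standard upper gradient for $Ef$ along an edge between neighbors $v,w$ is $|Ef(v)-Ef(w)|/\ell_\eps(\text{edge})$, and by a routine discrete Poincar\'e argument this difference is bounded by an average of $|f(\xi)-f(\eta)|$ over a pair of balls in $Z$. Computing $\int |g|^p \, d\mube$ and again applying~\eqref{eq:co-dim} recovers the Besov seminorm of $f$, with the same exponent match. Property~(1) of the trace, $Tu = \overline{u}\vert_Z$ for Lipschitz $u$, is immediate from uniform continuity, and $T\circ E = \mathrm{Id}$ follows from Lebesgue's differentiation theorem applied to the averages defining $E$.

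The principal obstacle I expect is the bookkeeping that produces the exponent $\theta = 1 - \beta/(\eps p)$: one has to align (i) the scaling $r^{\beta/\eps}$ in~\eqref{eq:co-dim}, (ii) the factor $e^{-\eps n}$ by which vertex radii in the combinatorial metric shrink after uniformization, and (iii) the factor $r^{-\theta p}$ in the Besov kernel, after a Fubini-type exchange between the vertex sum and the double integral over $Z$. Getting these three scales to conspire into one clean exponent is the computational heart of both halves of (B).
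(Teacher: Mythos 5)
This proposition is not proved in the paper at all: it is imported verbatim from \cite{B2S} (Theorems~1.1 and~10.2), so there is no in-paper argument to measure your attempt against. Your outline does reproduce the architecture of the cited proof faithfully: the extension operator you describe is exactly the one the present paper records in \eqref{eq:Def-Eu}, the scaling identity you lean on is \eqref{eq:co-dim}, the resulting gradient bound is \eqref{eq:norm-gEu}, and the exponent match $\theta=1-\beta/(\eps p)$ is indeed where the computation lives. Two remarks on where your route differs from, or falls short of, the source. First, in \cite{B2S} the doubling property and the $1$-Poincar\'e inequality are not obtained by a direct telescoping argument on $X_\eps$; they are first established for the ununiformized graph $X$ with the lifted measure $\mu_+$ (where they follow from bounded geometry of the filling and the doubling of $\nu$), and then transferred to $(X_\eps,d_\eps,\mube)$ via the general uniformization machinery of \cite{BHK} and \cite{B1S}. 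Your direct chaining along uniform curves is morally what that transfer theorem does internally, so this is a difference of organization rather than of substance, but as written it hides the real work. Second, the genuinely delicate steps of part (B) remain gestures in your sketch: the $\nu$-a.e.\ (in fact quasi-everywhere) existence of the boundary limits defining $T$ requires a maximal-function or capacity argument on $\overline{X}_\eps$, not just the Poincar\'e inequality, and one must also know that $N^{1,p}(X_\eps)=N^{1,p}(\overline{X}_\eps)$ and that the Poincar\'e inequality is available on balls centered at boundary points before the chaining toward $\xi\in Z$ is legitimate. These are exactly the points the cited theorems package up, and they are why the paper treats the proposition as a black box rather than reproving it.
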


The above proposition is a key tool for us in this note. We will exploit this 
identification of $B^\theta_{p,p}(Z)$ with the trace of $N^{1,p}(X_\eps)$ frequently.
The fine properties of 
functions in $N^{1,p}(X_\eps,\mube)$ follow from the results of~\cite{HKST, BBbook} thanks to the doubling property
of $\mube$ and the support of the $1$-Poincar\'e inequality. While $N^{1,p}(X_\eps)$ also depends on the 
choice of $\beta$ in defining the measure on $X_\eps$, we will suppress this dependance in our notation as 
we fix $\theta$ and $p$, and hence $\beta$ in this note.

Also
the following construction of the extension $Eu$ of $u\in B^\theta_{p,p}(Z)$ to $X_\eps$ 
will be important. 
In~\cite[Theorem~12.1]{B2S}, the extension 
is constructed by first defining $Eu((n,z))$, $z\in A_n$,
by 
\begin{equation}\label{eq:Def-Eu}
Eu((n,z))=\vint_{B(z,\alpha^{-n})} u\, d\nu,
\end{equation}
and then extending $Eu$ linearly (with respect to the uniformized metric $d_\eps$) to the edges that make up the 
graph $X_\eps$. It is shown in~\cite[Theorem~12.1]{B2S}
that $Eu\in N^{1,p}(X_\eps,\mu_\beta)$ when $\theta=1-\beta/(p\eps)$,
with $TEu=u$ $\nu$-a.e.~in $Z$, and moreover
\begin{equation*}
\int_{\overline{{X}}_{\epsilon}} |Eu|^p \dmu_{\beta} \lesssim \int_{Z} |u|^p\, d\nu
\end{equation*}
and
\begin{equation}\label{eq:norm-gEu}
\int_{\overline{{X}}_{\epsilon}}g_{Eu}^p \dmu_{\beta} \lesssim \Vert u\Vert^{p}_{B^{\theta}_{p, p}(Z)}.
\end{equation}

\subsection{Capacities and Hausdorff content}\label{SubSec:Cap}

As mentioned in Subsection~\ref{SubSec:Newt}, functions in $N^{1,p}(Y)$ cannot be arbitrarily modified on general sets of
measure zero. Therefore, to study fine properties of such functions, we need a finer notion than null measure, and this 
is one purpose of the notion of capacity.

Let $(Y,d,\mu)$ be a metric measure space with $\mu$ a Radon measure. Given a set $E\subset Y$ and $1\le p<\infty$,
we set the Newton-Sobolev $p$-capacity of $E$ to be the number
\[
\Cp_{N^{1,p}(Y)}(E):=\inf_u\Vert u\Vert_{N^{1,p}(Y)}^p,
\]
where the infimum is over all functions $u\in \widetilde{N}^{1,p}(Y)$ satisfying $u\ge 1$ on $E$. It follows from the 
results of~\cite{Sh1, HKST} that Newton-Sobolev functions can be arbitrarily perturbed
only on sets of capacity zero.

On the other hand, Besov functions can
be perturbed arbitrarily on sets of measure zero. For this reason
the Besov capacity of $E$ is set to be
\[
\Cp_{B^\theta_{p,p}(Y)}(E):=\inf_u \int_Y|u|^p\, d\mu+\Vert u\Vert_{B^\theta_{p,p}(Y)}^p
\]
with infimum over all $u\in B^\theta_{p,p}(Y)$ such that $u\ge 1$ on a \emph{neighborhood of} $E$.

Related to the above two capacities there is a notion of \emph{relative capacity} of a condenser $(E,F; Y)$. If $E,F\subset Y$, then
\[
\cp_{N^{1,p}(Y)}(E, F):=\inf_u \int_Yg_u^p\, d\mu
\]
where the infimum is over all $u\in N^{1,p}(Y)$ satisfying $u\ge 1$ in $E$ and
$u\le 0$ in $F$, 
and $g_u$ the minimal $p$-weak upper gradient of $u$ as described 
at the end of Subsection~\ref{SubSec:Newt}. 
Similarly,
\[
\cp_{B^\theta_{p,p}(Y)}(E, F):=\inf_u \Vert u\Vert_{B^\theta_{p,p}(Y)}^p,
\]
where the infimum is over all $u\in B^\theta_{p,p}(Y)$ satisfying $u\ge 1$ in a neighborhood of
$E$ and $u\le 0$ in a neighborhood of $F$. 
Note that if $F\subset F_1$ and $E\subset E_1$, then
\[
\cp_{B^\theta_{p,p}(Y)}(E, F)\le \cp_{B^\theta_{p,p}(Y)}(E_1, F_1).
\]

Returning to our setting, it was shown in~\cite{B2S} 
that $N^{1,p}(X_\eps)=N^{1,p}(\overline{X}_\eps)$ and 
that when $E\subset Z$,
\[
\Cp_{N^{1,p}(\overline{X}_\eps)}(E)\simeq \Cp_{B^\theta_{p,p}(Z)}(E).
\]
It was shown there moreover that if $\Cp_{N^{1,p}(\overline{X}_\eps)}(E)=0$ then necessarily $\nu(E)=0$.
Note here that the statement holds regardless of the value of $\beta>0$ that generated the measure $\mube$ on $X_\eps$,
provided that $\beta$ is chosen so that $\theta=1-\tfrac{\beta}{\eps p}$.

Sobolev capacity is associated with Hausdorff content, as seen for example in~\cite[Section~1.1.18]{Maz0} in the Euclidean setting
and~\cite[Theorem~2.26]{HKM} in
Euclidean domains equipped with admissible weights. Given a set
$E\subset Y$,  $0<\alpha<\infty$, and $0<\tau\le \infty$, the $\alpha$-dimensional Hausdorff content of $E$ at scale $\tau$ is the
number
\[
\mathcal{H}^\alpha_\tau(E):=\inf_{(B_i)_{i\in I\subset \N}}\sum_{i\in I} \diam(B_i)^\alpha,
\]
where the infimum is over all countable covers $(B_i)_{i\in I\subset\N}$
of the set $E$, by balls $B_i$, such that for 
each $i\in I$ we have $\diam(B_i)<\tau$. 
The $\alpha$-dimensional Hausdorff measure of $E$ is then given by
\[
\mathcal{H}^\alpha(E):=\lim_{\tau\to 0^+}\mathcal{H}^\alpha_\tau(E).
\]

Hausdorff measures are a natural 
metric tool to use in an Ahlfors $Q$-regular space $Y$ to analyze Sobolev capacities.
For instance, if we assume 
in addition that $Y$ is complete, unbounded and supports a $p$-Poincar\'e inequality, with $1<p\le Q$, then it
follows from the results in \cite{Costea} that
if $\Cp_{N^{1,p}(Y)}(E)=0$, then $\mathcal{H}^s_\infty(E)=0$ for every $s>p-Q$, and conversely, 
if $\mathcal{H}^{Q-p}_\infty(E)=0$ (or even $\mathcal{H}^{Q-p}_\infty(E)<\infty$, when $1<p<Q$),
then $\Cp_{N^{1,p}(Y)}(E)=0$. 
We refer the interested reader to~\cite[Section~4.7.2]{EG} for the Euclidean
setting. Additional information can be found in~\cite[pages~28, 760]{Maz0}. In more general doubling metric measure
spaces co-dimensional Haudorff measures are more useful in controlling Sobolev capacites, see for 
instance~\cite[Proposition~3.11, Section~8]{GKS2022}, 
and relative capacities, see e.g.~\cite[Propositions~4.1 and~4.3]{Le}.

\section{Besov capacitary estimates}

In studying quasisymmetric mappings between metric spaces, there are two types of configurations that play a
key role. The first type of configuration is that of an annulus $B(x,R)\setminus B(x,r)$, and the associated condenser
is the triplet $(\overline{B}(x,r), X\setminus B(x,R), X)$ for $0<r<R\le \text{diam}(X)/2$. The second type of 
configuration arises from considering two compact continua $E,F$ contained in a ball $B(x,R)$ with
$\min\{\text{diam}(E),\text{diam}(F)\}\ge R/C$, and the associated condenser is $(E,F, X)$. We consider these two
configurations in the two subsections of this section.

We assume throughout this section  that the measure $\nu$ on
$Z$ is Ahlfors $Q$-regular for some $Q>0$. The results of this section are
modeled after \cite[Lemma~2.4 and Lemma~2.3]{KKSS} and~\cite{HK}.

\subsection{Relative Besov capacitary estimates for annular rings}\label{SubSec:2-Rings}

In this subsection we consider annular rings in $Z$, namely sets of the form $E=\overline{B}(x_0,r)$ and
$F=Z\setminus B(x_0,R)$ for $x_0\in Z$ and $0<r<R$. 
An analog of Case~2 of the following theorem for 
relative Newton-Sobolev capacity $\cp_{N^{1,Q}(Z)}$ can be found in~\cite[Lemma~3.14]{HK}.

\begin{theorem}\label{thm:Ahlfors-Annulus}
Assume that $Z$ is a compact metric space
and that $\nu$ is an Ahlfors $Q$-regular measure on $Z$, for some $Q>0$.
Let $1< p<\infty$ and $0<\theta<1$, and
suppose that $0<r<R/2$ and $x_0\in Z$. Then 
\[
\cp_{B^\theta_{p,p}(Z)}(\overline{B}(x_0,r), Z\setminus B(x_0,R))\le \xi(R)\, \Xi(r)\, \Psi(R/r),
\]
where 
\begin{enumerate}
\item if $p\theta>Q$, then $\xi(R)\simeq R^{Q-\theta p}$, $\Xi(r)=1$ and $\Psi(R/r)=1$.
\item if $p\theta=Q$, then $\xi(R)\simeq\Xi(r)\simeq 1$ and $\Psi(R/r)=\log(R/r)^{1-p}$.
\item if $p\theta <Q$, then $\xi(R)=1$, $\Xi(r)\simeq r^{Q-\theta p}$, and $\Psi(R/r)=1$.
\end{enumerate}
Therefore, when $p\theta=Q$
or when $p\theta<Q$ and $R\le 1$
the Besov capacity of the condenser $(\overline{B}(x_0,r), Z\setminus B(x_0,R))$ is at most $\Psi(R/r)\simeq \log(R/r)^{-\tau}$
for some $\tau\in\{p, p-1\}$.
\end{theorem}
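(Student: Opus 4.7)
The plan is to reduce the Besov capacity estimate to a Newton--Sobolev capacity estimate in the uniformized hyperbolic filling $(X_\eps, \mube)$, via Proposition~\ref{prop:TraceExt}. The key algebraic observation is that, with $\theta = 1 - \beta/(\eps p)$ and $\nu$ Ahlfors $Q$-regular, the comparison~\eqref{eq:co-dim} gives $\mube(B_{d_\eps}(x_0, \rho)) \simeq \rho^s$ for $x_0 \in Z$ and $0 < \rho \le \diam(Z)$, where $s := Q + \beta/\eps = Q + (1-\theta)p$. Crucially $s - p = Q - \theta p$, so the three cases of the theorem correspond exactly to the classical cases $p > s$, $p = s$, $p < s$ of annular condenser capacity in a doubling $1$-Poincar\'e space whose boundary-centered balls display regularity exponent $s$.

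For each case I will construct a radial Lipschitz cutoff $v \colon \overline{X}_\eps \to [0,1]$ based at $x_0 \in Z$, supported in a $d_\eps$-ball centered on $\partial X_\eps$, and estimate $\int g_v^p\,d\mube$ using $\mube(B_{d_\eps}(x_0,\rho)) \simeq \rho^s$. In Case 1 ($p\theta > Q$, $p > s$) take the linear cutoff $v = \max(0,\min(1,(R-d_\eps(\cdot,x_0))/(R-r)))$; then $g_v \lesssim 1/R$ on $B_{d_\eps}(x_0,R)$ gives $\int g_v^p\,d\mube \lesssim R^{-p}R^s = R^{Q-\theta p}$. In Case 2 ($p\theta = Q$, $p = s$) take the standard logarithmic cutoff $v = \max(0,\min(1,\log(R/d_\eps(\cdot,x_0))/\log(R/r)))$; a dyadic annular decomposition around $x_0$ shows that each annulus contributes $\rho^{s-p}\log(R/r)^{-p} = \log(R/r)^{-p}$, and the sum over $\simeq \log(R/r)$ annuli produces the total $\log(R/r)^{1-p}$. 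In Case 3 ($p\theta < Q$, $p < s$) take a linear cutoff supported in $B_{d_\eps}(x_0,2r) \subset B_{d_\eps}(x_0,R)$ (using $r < R/2$), giving $\int g_v^p\,d\mube \lesssim r^{-p}r^s = r^{Q-\theta p}$.

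The trace $Tv = v|_Z$ is then admissible for the Besov condenser, after a harmless slight perturbation of the inner/outer radii to secure genuine open neighborhoods of $E$ and $F$ in $Z$. Combining the invariance of the Besov seminorm under additive constants, the boundedness of $T\colon N^{1,p}(X_\eps) \to B^\theta_{p,p}(Z)$, and the global $1$-Poincar\'e inequality on the bounded doubling space $(X_\eps,\mube)$ (used to absorb $\|v - v_{X_\eps}\|_{L^p(\mube)}^p$ into a constant multiple of $\int g_v^p\,d\mube$), we obtain
\[
\cp_{B^\theta_{p,p}(Z)}(E,F) \le \|Tv\|_{B^\theta_{p,p}(Z)}^p \lesssim \int_{\overline{X}_\eps} g_v^p\,d\mube,
\]
yielding the claimed bound in each case. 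The concluding \emph{therefore} clause is immediate: Case 2 gives $\log(R/r)^{1-p} = \log(R/r)^{-(p-1)}$ with $\tau = p-1$, while in Case 3 with $R \le 1$ the polynomial decay $r^{Q-\theta p}$ (positive exponent) is dominated by $\log(R/r)^{-p}$ with $\tau = p$, since $r^a|\log r|^p \to 0$ as $r \to 0^+$ for every $a > 0$ and $\log(R/r)$ is bounded whenever $R/r$ is. The main technical obstacle I anticipate is the admissibility bookkeeping between $d_\eps$- and $d_Z$-balls, together with the $L^p$-term absorption via the global Poincar\'e inequality; both are routine once one commits to the hyperbolic filling viewpoint developed in Subsections~\ref{SubSec:HypFill}--\ref{subSec:setting}.
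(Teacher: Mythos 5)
Your proposal is correct and follows essentially the same route as the paper: reduce to the uniformized hyperbolic filling via Proposition~\ref{prop:TraceExt} and~\eqref{eq:co-dim}, use linear Lipschitz cutoffs at scale $R$ (resp.\ $r$) in the cases $p\theta>Q$ (resp.\ $p\theta<Q$) and the logarithmic cutoff with a dyadic annular decomposition when $p\theta=Q$, then pass to the trace using the bound $\Vert Tv\Vert_{B^\theta_{p,p}(Z)}^p\lesssim\int_{X_\eps}g_v^p\,d\mube$. The only (harmless) deviation is your absorption of the $L^p$-term via the Poincar\'e inequality, which is unnecessary since the relative condenser capacity $\cp_{B^\theta_{p,p}}$ is defined using only the Besov energy seminorm~\eqref{eq:def-Besov}.
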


\begin{proof}
We will utilize the hyperbolic filling here to give an alternate proof than the one in~\cite{KKSS}. We fix $\theta$ with
$0<\theta<1$ and choose $\beta>0$ such that $\theta=1-\beta/(\eps p)$, and consider the space $(X_\eps, d_\eps,\mube)$ 
as described in Subsection~\ref{SubSec:HypFill}. Then $B^\theta_{p,p}(Z)$ is the trace space of $N^{1,p}(X_\eps,\mube)$,
as explained in Proposition~\ref{prop:TraceExt}.

In Case~1, that is, when $p\theta>Q$, we consider the test function $u$ given by
\[
u(x)=\left(1-\frac{2\text{dist}(x,B(x_0,R/2))}{R}\right)_+.
\]
Then $u=1$ on $B(x_0,r)\subset B(x_0,R/2)$, $u=0$ on $X_\eps\setminus B(x_0,R)$,
and $u$ is $2/R$-Lipschitz continuous. Therefore
\[
\int_{X_\eps}g_u^p\, d\mube\lesssim \mube(B(x_0,R)\setminus B(x_0,R/2))\, \left(\frac{2}{R}\right)^p.
\]
Note that the balls considered in the above estimate are all centered at points in $Z=\partial X_\eps$, and so we are
in the realm of~\eqref{eq:co-dim}.
Using the facts that $\mube(B(x_0,R)\setminus B(x_0,R/2))\lesssim R^{Q+\beta/\eps}$ and $\theta=1-\beta/(p\eps)$,
we obtain
\[
\int_{X_\eps}g_u^p\, d\mube\lesssim R^{Q-p\theta}.
\]

In Case~3, that is, when $p\theta<Q$, we instead consider the function $u$ given by
\[
u(x)=\left(1-\frac{\text{dist}(x,B(x_0,r))}{r}\right)_+
\]
and note that $u=1$ on $B(x_0,r)$ and $u=0$ on $X_\eps\setminus B(x_0,2r)$. Thus we see that
\[
\int_{X_\eps}g_u^p\, d\mube\lesssim r^{Q-\theta p}.
\]

In both of these cases, by~\cite[Theorem~11.1(11.2)]{B2S}, with $u$ also denoting the trace of $u$ to $Z$ 
(and as $u$ is Lipschitz continuous,
this is a pointwise identification), we have the desired upper bound for the Besov capacity as well.

Finally, in Case~2 ($p\theta=Q$) we define the function $u$ on $\overline{X}_\eps$ by
\[
u(x):=\min\bigg\lbrace \left(\frac{\log(R/d(x,x_0))}{\log(R/r)}\right)_{+}, 1\bigg\rbrace.
\]
Then by the chain rule for upper gradients (see~\cite[(6.3.19)]{HKST} or
\cite[Theorem~2.16]{BBbook}) and by the fact that $1$ is an upper gradient of the distance function,
we see that
\begin{equation}\label{eq:g-sub-u}
g_u(x)\le \frac{1}{\log(R/r)}\, \frac{1}{d(x,x_0)}\, \chi_{B(x_0,R)\setminus B(x_0,r)}(x).
\end{equation}
Again 
by~\cite[Theorem~11.1(11.2)]{B2S},  
we have
\[
\Vert u\Vert_{B^\theta_{p,p}(Z)}^p\lesssim \int_{X_\eps}g_u^p\, d\mube.
\]
Hence it suffices to obtain integral estimates for $g_u$. Note that
$B(x_0,R)\setminus B(x_0,r)\subset\bigcup_{j=0}^{n_R}B(x_0,2^{j+1}r)\setminus B(x_0,2^jr)$ where 
$n_R$ is the smallest positive integer such that $2^{n_R}r\ge R$. We have $n_R\simeq \log(R/r)$. Then by the
bound on $g_u$ in~\eqref{eq:g-sub-u} and by~\eqref{eq:co-dim},
\begin{align*}
\int_{X_\eps}g_u^p\, d\mube  &\le \log(R/r)^{-p} \sum_{j=0}^{n_R}\int_{B(x_0,2^{j+1}r)\setminus B(x_0,2^jr)}
\frac{1}{d(x,x_0)^p}\, d\mube(x)\\
 &\lesssim \log(R/r)^{-p} \sum_{j=0}^{n_R}\frac{\mube(B(x_0, 2^jr))}{(2^jr)^p}\\
 &\simeq \log(R/r)^{-p} \sum_{j=0}^{n_R} \frac{\nu(B(x_0,2^jr))}{(2^jr)^{p-\beta/\eps}}\\
 &\simeq \log(R/r)^{-p} \sum_{j=0}^{n_R} \frac{1}{(2^jr)^{p-Q-\beta/\eps}}
 \ \ = \ \ \log(R/r)^{-p}\, n_R,  
\end{align*}
where the last equality followed from the identity $\theta=1-\beta/(\eps p)$ together with $p\theta=Q$,
and the penultimate estimate came from the 
assumption that $\nu$ is Ahlfors $Q$-regular.  
Since  
$n_R\simeq \log(R/r)$, we have
\[
\int_{X_\eps} g_u^p\, d\mube\lesssim \log(R/r)^{-p} n_R \simeq \log(R/r)^{1-p},
\]
verifying the claim in Case~2.
\end{proof}

\begin{remark}
Note that in Case~2, the capacity of the annulus tends to zero as $R/r\to\infty$. In Case~3
then the capacity of the annulus tends to zero as $r\to 0$. This perspective plays a key role in the study of homeomorphisms
that induce Besov space morphisms, and their relationship to local quasisymmetry and metric quasiconformality, see Section~\ref{Sec:QC} below.
\end{remark}

We record also the following converse of Theorem~\ref{thm:Ahlfors-Annulus}.
These bounds are 
not needed in the later results, but they in particular show that the 
estimates in Theorem~\ref{thm:Ahlfors-Annulus} are often optimal. We refer the interested reader to~\cite[Lemma~9.3.6]{HKST}
and~\cite[Sections~6 and~7]{BBL}
for the analogous estimates for Sobolev capacity in doubling metric measure spaces supporting a $p$-Poincar\'e inequality.

\begin{theorem}\label{thm:Ahlfors-Annulus-rev}
Assume that $Z$ is a compact metric space
and that $\nu$ is an Ahlfors $Q$-regular measure on $Z$, for some $Q>0$.
Let $1< p<\infty$ and $0<\theta<1$, and
suppose that $x_0\in Z$ and
$0<r<R<\diam(Z)/4C_0$ for suitably large constant $C_0>2$. Then 
\[
\cp_{B^\theta_{p,p}(Z)}(\overline{B}(x_0,r), Z\setminus B(x_0,R))\ge \xi(R)\, \Xi(r)\, \Psi(R/r),
\]
where $\xi, \Xi$, and $\Psi$ are as in Theorem~\ref{thm:Ahlfors-Annulus},
and in the case $p\theta = Q$
we assume in addition that $r\le R/2$.
\end{theorem}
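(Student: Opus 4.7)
The plan is to pass from the Besov condenser capacity on $Z$ to a Newton--Sobolev annular capacity in the uniformized hyperbolic filling $\overline{X}_\eps$, and then invoke the classical annular capacity lower bound in doubling spaces supporting a Poincar\'e inequality. Given $u \in B^\theta_{p,p}(Z)$ admissible for the Besov condenser, I would take its extension $Eu$ from~\eqref{eq:Def-Eu}, which satisfies $Eu \in N^{1,p}(\overline{X}_\eps,\mube)$ with $\int_{\overline{X}_\eps} g_{Eu}^p\,d\mube \lesssim \Vert u\Vert_{B^\theta_{p,p}(Z)}^p$ by~\eqref{eq:norm-gEu}. Truncating to $v := \min\{1, \max\{0, Eu\}\}$ preserves this bound, and using~\eqref{eq:Def-Eu} together with the trace identity $T(Eu) = u$ $\nu$-a.e., the function $v$ should be admissible for the Newton--Sobolev relative capacity condenser in $\overline{X}_\eps$ whose plates are $\overline{B}(x_0, r)$ and $\partial X_\eps \setminus B(x_0, R)$ regarded as subsets of $\partial X_\eps$.

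The second step is to apply the standard annular capacity lower bound in $(\overline{X}_\eps, d_\eps, \mube)$. Since this space is complete, doubling, and supports the $1$-Poincar\'e (hence the $p$-Poincar\'e) inequality by Proposition~\ref{prop:TraceExt}, one has
\[
\cp_{N^{1,p}(\overline{X}_\eps)}(\overline{B}(x_0,r),\overline{X}_\eps\setminus B(x_0,R)) \gtrsim \left( \int_r^R \left( \frac{t^p}{\mube(B(x_0,t))} \right)^{\frac{1}{p-1}} \frac{dt}{t} \right)^{1-p}
\]
by~\cite[Lemma~9.3.6]{HKST} and the analogous bounds in~\cite[Sections~6 and~7]{BBL}. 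The volume growth $\mube(B(x_0,t)) \simeq t^{Q + p(1-\theta)}$ from~\eqref{eq:co-dim} (with $\beta = (1-\theta)p\eps$, so $\beta/\eps = p(1-\theta)$) turns the integrand into $t^{\gamma-1}$ with $\gamma = (p\theta - Q)/(p-1)$. A direct case-by-case evaluation of $\int_r^R t^{\gamma-1}\,dt$ and then raising to the power $1-p$ yields exactly $R^{Q-p\theta}$ when $p\theta > Q$ (dominated by the upper endpoint), $\log(R/r)^{1-p}$ when $p\theta = Q$, and $r^{Q-p\theta}$ when $p\theta < Q$ (dominated by the lower endpoint), matching the three cases in the theorem.

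The main obstacle will be the admissibility transfer in the first step: Newton--Sobolev admissibility for a condenser with plates on $\partial X_\eps$ is naturally a quasi-everywhere condition, so one needs to leverage the capacity equivalence $\Cp_{N^{1,p}(\overline{X}_\eps)} \simeq \Cp_{B^\theta_{p,p}(Z)}$ from Subsection~\ref{SubSec:Cap} together with the quasicontinuity of $v$ to secure the bound rigorously. A secondary technical point is that the standard annular capacity lower bound is usually phrased for globally Ahlfors regular spaces, whereas $\mube$ exhibits the stated power growth only for boundary-centered balls; justification in the present setting should proceed through curve-family modulus arguments in $\overline{X}_\eps$, using~\eqref{eq:co-dim} along the dyadic chain of scales between $r$ and $R$. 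The restriction $R < \diam(Z)/(4C_0)$ with $C_0 > 2$ sufficiently large is imposed precisely to guarantee room for this chain in the bulk of $\overline{X}_\eps$, away from the root vertex $p_0$ of the filling where the geometry behaves differently.
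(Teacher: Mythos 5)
Your overall strategy coincides with the paper's: extend the Besov test function to $\overline{X}_\eps$ via $Eu$, use \eqref{eq:norm-gEu}, reduce to an annular Newton--Sobolev capacity lower bound in $(\overline{X}_\eps,d_\eps,\mube)$ via \cite[Lemma~9.3.6]{HKST}, and evaluate using \eqref{eq:co-dim}; your case-by-case computation of the integral $\int_r^R t^{\gamma-1}\,dt$ with $\gamma=(p\theta-Q)/(p-1)$ is correct and matches the paper's three cases (there phrased as $p\lessgtr Q_\beta$ with $Q_\beta=Q+\beta/\eps$).

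However, there is a genuine gap at the admissibility transfer, and the patch you propose does not close it. You make $v=\min\{1,\max\{0,Eu\}\}$ admissible for a condenser whose plates are subsets of $\partial X_\eps$, which is a $\mube$-null set; since shrinking the plates only decreases the relative capacity, the standard annular lower bounds (which are stated for full metric balls as plates) give you nothing for this condenser, and establishing ``$v=1$ quasi-everywhere on $\overline{B}(x_0,r)\cap Z$'' from $T(Eu)=u$ $\nu$-a.e.\ would require a separate quasicontinuity argument that the set-capacity equivalence $\Cp_{N^{1,p}(\overline{X}_\eps)}\simeq\Cp_{B^\theta_{p,p}(Z)}$ does not supply for condensers. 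The correct and much simpler resolution, which is what the paper does, is pointwise: since $Eu$ at a vertex $(n,z)$ is the average of $u$ over $B(z,\alpha^{-n})$ and $u\equiv 1$ on a \emph{neighborhood} of $\overline{B}(x_0,r)$ (resp.\ $u\equiv 0$ on a neighborhood of $Z\setminus B(x_0,R)$), one reads off from \eqref{eq:Def-Eu} that $Eu\equiv 1$ on the full ball $\overline{B}_\eps(x_0,r/(\tau\alpha))$ and $Eu\equiv 0$ off $B_\eps(x_0,\tau\alpha R)$. Thus $Eu$ is admissible for the full-ball condenser with radii $r'=r/(\tau\alpha)$ and $R'=\tau\alpha R$, to which \cite[Lemma~9.3.6]{HKST} applies directly; this also explains the role of the constant $C_0\ge 2\tau\alpha$. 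Your ``secondary technical point'' about the lower mass bound is likewise handled more directly than you suggest: \cite[Lemma~10.6]{B2S} gives a global lower mass bound exponent $Q_\beta=\max\{1,Q+\beta/\eps\}$ for $\mube$ on all of $\overline{X}_\eps$, so no separate modulus argument along dyadic chains is needed.
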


\begin{proof}
Fix $0<\theta<1$ and
choose $\beta>0$ so that $\theta =1-\beta/(p \eps)$.
Let $u\in B^\theta_{p,p}(Z)$ be such that $u=1$ 
in a neighborhood of $\overline{B}(x_0,r)$ and $u=0$ in a neighborhood of $Z\setminus B(x_0,R)$.

Let $Eu$ be the extension of $u$ to the 
uniformization $X_\eps$ of the
hyperbolic filling $X$ of $Z$ as 
explained in Section~\ref{subSec:setting}. 
Then
$Eu \in \Np(\overline{X}_{\epsilon}, \mu_{\beta})$, and by~\eqref{eq:norm-gEu} we have
\[
\int_{\overline{{X}}_{\epsilon}}g_{Eu}^p \dmu_{\beta} \lesssim \Vert u\Vert^{p}_{B^{\theta}_{p, p}(Z)}.
\] 
From the way the extension $Eu$ is defined in~\cite[Theorem~12.1]{B2S}, see~\eqref{eq:Def-Eu},
it follows that $Eu=1$ on $\overline{B}_\eps(x_0,r/(\tau\alpha))$ and
$Eu=0$ on $\overline{X_\eps}\setminus B_\eps(x_0,\tau\alpha R)$;
here the parameters $\tau>1$ and $\alpha>1$ are as in Section~\ref{SubSec:HypFill},
and the subscript $\eps$ in $B_\eps$ refers to the fact that these balls are with respect
to $\overline{X_\eps}$. We ensure that $C_0\ge 2\tau\alpha$.

Write $r'=r/(\tau\alpha)$ and $R'=\tau\alpha R$. By our assumption on $C_0$, we know that $R'<\diam(Z)/4$.
Then $Eu$ is a test function for the capacity 
$\cp_{N^{1,p}(\overline{X_\eps})}(B_\eps(x_0,r'), \overline{X_\eps}\setminus B_\eps(x_0,R'))$
and so
\begin{equation*}
\int_{\overline{X_\eps}} g_{Eu}^p\, d\mube \ge \cp_{B^\theta_{p,p}(Z)}(B_\eps(x_0,r'), \overline{X_\eps}\setminus B_\eps(x_0,R')).
\end{equation*}

Recall from~\cite[Lemma~10.6]{B2S} that as $Z$ is Ahlfors $Q$-regular, we have a lower mass bound exponent for
$\mube$ on $\overline{X_\eps}$
given by $Q_\beta:=\max\{1,Q+\tfrac{\beta}{\eps}\}$. Also from Proposition~\ref{prop:TraceExt} we know that
$(\overline{X_\eps},d_\eps,\mube)$ is doubling and supports a $1$-Poincar\'e inequality; hence we are in a position
to apply~\cite[Lemma~9.3.6]{HKST} together with~\cite{KS}, to obtain that 
\[
\cp_{N^{1,p}(\overline{X_\eps})}(B_\eps(x_0,r'), \overline{X_\eps}\setminus B_\eps(x_0,R'))\ge C(R',r'),
\]
where 
\begin{enumerate}
\item if $1 < p<Q_\beta$, 
then $Q_\beta=Q+\beta/\eps>1$ and thus
\begin{align*}
 C(R',r')&\simeq \frac{\mube(B_\eps(x_0,r'))^{1-\frac{p}{Q_\beta}}\, \mube(B_\eps(x_0,R'))^{\frac{p}{Q_\beta}}}
    {(R')^p}\\
    &\simeq (r')^{(Q+\frac\beta\eps)(1-\frac{p}{Q_\beta})}\, (R')^{\frac{p}{Q_\beta}(Q+\frac\beta\eps)-p}
\simeq r^{Q+\tfrac\beta\eps-p}.
\end{align*}
\item if $1 <  p=Q_\beta$, then again $Q_\beta=Q+\beta/\eps$ and so
\begin{align*}
C(R',r')&\simeq \frac{\mube(B_\eps(x_0,R'))}{(R')^{Q_\beta}}\, 
 \left(\log\left(\frac{C\, \mube(B_\eps(x_0,R'))}{\mube(B_\eps(x_0,r'))}\right)\right)^{1-Q_\beta}\\
 & \simeq (R')^{Q+\frac\beta\eps-Q_\beta}\, \left(\log\left(C\tfrac{R'}{r'}\right)\right)^{1-Q_\beta}
  \simeq \left(\log \tfrac{R}{r}\right)^{1-p}.
\end{align*}
In the last step we need the assumption that $R/r\ge 2$.
\item if $p>Q_\beta$, then 
\[
C(R',r')\simeq \frac{\mube(B_\eps(x_0,R'))}{(R')^p}\simeq (R')^{Q+\frac\beta\eps-p}\simeq R^{Q+\frac\beta\eps-p}.
\]
\end{enumerate}
In the above cases we also used~\eqref{eq:co-dim} and the fact that $\nu$ is Ahlfors $Q$-regular. 
Note that the balls $B_\eps(x_0,R)$ and $B_\eps(x_0,r)$ are balls centered at the point $x_0\in Z$.
(Alternatively, similar estimates as above can be obtained by applying the 
capacity estimates given in~\cite[Sections~6 and~7]{BBL}.)

From the above estimates and \eqref{eq:norm-gEu}
we conclude that 
\[
\Vert u\Vert^{p}_{B^{\theta}_{p, p}(Z)}\ge C(R,r),
\]
where $C(R,r)$ has the desired forms as in the statement of Theorem~\ref{thm:Ahlfors-Annulus}
since $\theta p = p-\beta/\eps$.
The claim follows by  
taking the infimum over all such capacity test functions $u$.
\end{proof}

\subsection{Loewner-type bounds for Besov capacity}\label{SubSec:Loewner}

Next we obtain an estimate for the Besov capacity associated to two compact continua $E,F$,
given in terms of their Hausdorff contets.
Recall the definition
of Hausdorff content from Subsection~\ref{SubSec:Cap}.

\begin{theorem}\label{thm:Loewner}
Assume that $Z$ is a compact metric space
and that $\nu$ is an Ahlfors $Q$-regular measure on $Z$, for some $Q>0$.
Let $x_0\in Z$, $R>0$ and $0< s < Q$. 
Suppose also that $E,F$ are two disjoint compact 
sets such that $E,F\subset B(x_0,R)$. 
Then for each $p>\max\{1, Q-s\}$ and for each $\theta$ satisfying
$\tfrac{Q-s}{p}<\theta<1$, we have 
\[
\cp_{B^\theta_{p,p}(Z)}(E,F) \gtrsim \frac{\mathcal{H}^s_\infty(E) \wedge \mathcal{H}^s_\infty(F)}{R^{s-Q+\theta p}}.
\]
\end{theorem}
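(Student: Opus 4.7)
The strategy is to reduce the Besov capacity lower bound to a relative Newton-Sobolev capacity lower bound in the uniformized hyperbolic filling $\overline{X}_\eps$, and then prove the latter via a Maz'ya-Adams-type Frostman trace inequality. Let $u \in B^\theta_{p,p}(Z)$ be admissible for $\cp_{B^\theta_{p,p}(Z)}(E,F)$. Truncating to $[0,1]$ does not increase the Besov seminorm, so I may assume $0\le u\le 1$. Extend $u$ to $Eu$ on $\overline{X}_\eps$ via \eqref{eq:Def-Eu}; since the vertex values of $Eu$ are averages of $u$ followed by linear interpolation along edges, $Eu$ takes values in $[0,1]$, equals $1$ on a neighborhood of $E$ in $\overline{X}_\eps$, and equals $0$ on a neighborhood of $F$ in $\overline{X}_\eps$. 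Hence $Eu$ is admissible for the relative Newton-Sobolev capacity, and \eqref{eq:norm-gEu} gives
\[
\cp_{N^{1,p}(\overline{X}_\eps,\mube)}(E,F) \ \le\ \int_{\overline{X}_\eps} g_{Eu}^p\, d\mube \ \lesssim\ \|u\|^p_{B^\theta_{p,p}(Z)}.
\]
It therefore suffices to show that $\cp_{N^{1,p}(\overline{X}_\eps,\mube)}(E,F) \gtrsim (\mathcal{H}^s_\infty(E)\wedge \mathcal{H}^s_\infty(F))\,R^{Q-\theta p-s}$.

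For this lower bound, by \eqref{eq:co-dim} and the Ahlfors $Q$-regularity of $\nu$ we have $\mube(B_\eps(y,r))\simeq r^{Q+p(1-\theta)}$ for $y\in Z$; with $Q_\beta := Q+p(1-\theta)$, the hypothesis $\theta p > Q-s$ is equivalent to $s > Q_\beta - p$, the subcritical Frostman regime. Assuming $\mathcal{H}^s_\infty(E)>0$ (else trivial), apply Frostman's lemma in the doubling space $(Z,d,\nu)$ to obtain a finite Borel measure $\mu_E$ supported in $E$ with $\mu_E(E)\gtrsim \mathcal{H}^s_\infty(E)$ and $\mu_E(B(y,r))\le r^s$ for all $y\in Z$, $r>0$; select $\mu_F$ similarly for $F$. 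Given any admissible $v\in N^{1,p}(\overline{X}_\eps,\mube)$ with $0\le v\le 1$, let $B_0 := B_\eps(x_0,CR)$ with $E,F\subset B_0$, and write $v_{B_0}$ for the $\mube$-average of $v$ on $B_0$. I may assume $v_{B_0}\le 1/2$ (swapping $E$ and $F$ otherwise), giving $|v-v_{B_0}|\ge 1/2$ on $E$. The local Maz'ya-Adams trace-Poincar\'e inequality for the Frostman measure $\mu_E$ in the doubling $1$-Poincar\'e space $(\overline{X}_\eps,d_\eps,\mube)$ reads
\[
\int_{B_0} |v - v_{B_0}|^p\, d\mu_E \ \lesssim\ R^{p-Q_\beta+s} \int_{B_0} g_v^p\, d\mube,
\]
and is proved by dyadic Riesz-potential arguments based on telescoping the $1$-Poincar\'e inequality (cf.\ the toolkit of \cite[Chapter~9]{HKST}). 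Combining with $|v-v_{B_0}|^p\ge 2^{-p}$ on $E$,
\[
\int_{\overline{X}_\eps} g_v^p\, d\mube \ \gtrsim\ R^{Q_\beta-p-s}\,\mu_E(E) \ \simeq\ \frac{\mathcal{H}^s_\infty(E)}{R^{s-Q+\theta p}}.
\]
Taking the infimum over admissible $v$, combined with the reduction above, proves the claim; the symmetric situation $v_{B_0}\ge 1/2$ produces the bound with $\mathcal{H}^s_\infty(F)$, and together they yield the minimum.

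The main technical obstacle is justifying the local Maz'ya-Adams trace-Poincar\'e inequality in the uniformized setting. The uniform lower mass bound $\mube(B_\eps(y,r))\gtrsim r^{Q_\beta}$ is only guaranteed for boundary points $y\in Z$, not throughout $\overline{X}_\eps$; however, because the Frostman measure lives on $Z$, the standard Riesz-potential proof can be localized to the near-$Z$ region where both the local mass bound and the global $1$-Poincar\'e inequality from Proposition~\ref{prop:TraceExt} are available. Some separate care is required in the regime $p\ge Q_\beta$ (Morrey / H\"older case), but the exponent $R^{p-Q_\beta+s}$ remains correct by dimensional analysis in every case.
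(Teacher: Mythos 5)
Your reduction to the hyperbolic filling is the same as the paper's, but from there you take a genuinely different route. The paper follows Heinonen--Koskela \cite[Theorem~5.9]{HK} directly: either some $x\in E$ and $y\in F$ both have $|Eu-Eu_{B_\eps(\cdot,\cdot)}|\le 1/3$, in which case a single application of the Poincar\'e inequality on $B_\eps(y,3R)$ plus the trivial bound $\mathcal{H}^s_\infty(E)\wedge\mathcal{H}^s_\infty(F)\lesssim R^s$ finishes; or else (say) $|Eu(x)-Eu_{B_\eps(x,R)}|\ge 1/3$ for \emph{every} $x\in E$, and then a pointwise pigeonhole on the telescoping sum produces for each $x$ a radius $2^{-j_x}R$ at which the local energy is large, after which the $5r$-covering lemma turns these balls into a cover of $E$ that bounds $\mathcal{H}^s_\infty(E)$ directly --- no Frostman measure and no strong-type trace inequality are needed. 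You instead dualize in the Maz'ya style: Frostman's lemma gives a measure $\mu_E$ with $\mu_E(B(y,r))\le r^s$ and mass $\gtrsim \mathcal{H}^s_\infty(E)$, and you invoke an $L^p(\mu_E)$ trace--Poincar\'e inequality. This is a legitimate alternative, and your key inequality does follow from the method you gesture at, but the one-line justification hides where the hypothesis $\theta p>Q-s$ enters: starting from $|v(x)-v_{B_0}|\lesssim \sum_j (2^{-j}R)^{1-Q_\beta/p}\bigl(\int_{B_j(x)}g_v^p\,d\mube\bigr)^{1/p}$ for $x\in E\subset Z$ (where the lower mass bound is available), one must apply H\"older against geometric weights $2^{-j\epsilon}$ with $0<\epsilon<(p-Q_\beta+s)/p$ strictly, and only then does Fubini together with $\mu_E(B(w,2^{-j}R))\le (2^{-j}R)^s$ produce a convergent series summing to $\simeq R^{p-Q_\beta+s}\int g_v^p\,d\mube$; taking the weight exponent at the endpoint makes every dyadic scale contribute a full $\int g_v^p\,d\mube$ and the sum diverges. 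The paper's route buys self-containedness (no Frostman lemma, only a weak-type estimate); yours buys a single clean inequality in place of a case analysis, at the cost of importing Frostman's lemma, which you should justify in a doubling metric space (e.g.\ via Christ-type dyadic cubes).

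Two further details to supply: (i) the pointwise telescoping estimate is available only at Lebesgue points of $v$, so you need that the exceptional set has $p$-capacity zero, hence vanishing $\mathcal{H}^s_\infty$-content for $s>Q_\beta-p$, hence $\mu_E$-measure zero by the Frostman growth condition; (ii) the claim that $Eu$ equals $1$ on an $\overline{X}_\eps$-neighborhood of $E$ requires the compactness argument that $\dist(E,Z\setminus U)>0$ for the neighborhood $U$ on which $u=1$, analogous to the radius dilation $r\mapsto r/(\tau\alpha)$ carried out for balls in the proof of Theorem~\ref{thm:Ahlfors-Annulus-rev}.
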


The proof of the theorem, given next, is modeled after the corresponding result for 
Sobolev capacities found in~\cite{HK}.

\begin{proof}
Fix $p>1$ such that $\frac{Q-s}{p}<1$, and let $\theta>0$ be such that $\tfrac{Q-s}{p}<\theta<1$. 
Choose $\beta>0$ in the hyperbolic filling construction given in Subsection~\ref{SubSec:HypFill}
so that $\theta =1-\beta/(p \eps)$. Then, because of the condition that
$Q-s<\theta p$, necessarily $p+s-Q-\beta/\eps>0$. 
Let $u\in B^\theta_{p,p}(Z)$ such that $u=1$ 
in a neighborhood of $E$ and $u=0$ in a neighborhood of $F$, and 
let $Eu$ be the extension of $u$ to the 
uniformization $X_\eps$ of the
hyperbolic filling $X$ of $Z$ as 
explained in Section~\ref{subSec:setting}. 
Then
$Eu \in \Np(\overline{X}_{\epsilon}, \mu_{\beta})$, and by~\eqref{eq:norm-gEu} we have
\[
\int_{\overline{{X}}_{\epsilon}}g_{Eu}^p \dmu_{\beta} \lesssim \Vert u\Vert^{p}_{B^{\theta}_{p, p}(Z)}.
\] 

We now proceed essentially as in~\cite[Proof of Theorem~5.9]{HK}. 
We cannot apply the theorem from~\cite{HK} directly
because we do not have knowledge of the requisite lower mass bound property for $\mu_\beta$ on $\overline{X}_\eps$.
Nevertheless, their proof does apply here because we only need to apply the lower mass bound property on balls centered at 
points in $\partial X_\eps=Z$, and for such balls we have the needed lower mass bound estimate 
from~\eqref{eq:co-dim}. 
For the convenience of the reader, we provide the complete proof here. See also~\cite{KKSS} for a similar adaptation
of~\cite{HK}.

We first show that 
\[
\frac{\mathcal{H}^s_\infty(E) \wedge \mathcal{H}^s_\infty(F)}{R^{s+p-(Q+\beta/\eps)}} 
\lesssim \int_{B(x_{0}, 4R)}g_{Eu}^p \dmu_\beta.
\]
If there exist points 
$x \in E$ and $y \in F$ such that neither $|Eu(x)-(Eu)_{B_\eps(x, R)}|$ nor $|Eu(y)-(Eu)_{B_\eps(y, 3R)}|$ exceeds $1/3$, then
\[
1 \leq |Eu(x)-Eu(y)| \leq \frac{1}{3}+|Eu_{B_\eps(x, R)}-Eu_{B_\eps(y, 3R)}|+\frac{1}{3},
\]
and so from the $1$-Poincar\'e inequality on $\overline{X}_\eps$ together with H\"older's inequality, 
the above inequality implies that 
\[
\frac13 \leq C\vint_{B_\eps(y, 3R)}|Eu-Eu_{B_\eps(y, 3R)}|\dmu_\beta 
\leq CR\biggl(\vint_{B_\eps(y, 3R)}g_{Eu}^p \dmu_{\beta}\biggr)^{1/p}.
\]
Hence from~\eqref{eq:co-dim} we get 
\begin{equation}\label{eq:Loewner-1}
\frac{\nu(B_\eps(y,R))}{R^{p-\beta/\eps}} \lesssim \frac{\mu_{\beta}(B_\eps(y, R))}{CR^p} 
\leq \int_{B_\eps(y,3R)}g_{Eu}^p \dmu_{\beta}.
\end{equation}
Then, from the Ahlfors $Q$-regularity of $\nu$, together with 
the estimates
$\mathcal{H}^s_\infty(E)\lesssim R^s$
and $\mathcal{H}^s_\infty(F)\lesssim R^s$ 
and the identity $\theta p = p -\beta/\eps$, it follows that
\[
\frac{\mathcal{H}^s_\infty(E) \wedge \mathcal{H}^s_\infty(F)}{R^{s-Q+\theta p}} 
\lesssim\frac{R^s}{R^{s-Q+p-\beta/\eps}}\lesssim \int_{B_\eps(x_{0}, 4R)}g_{Eu}^p \dmu_\beta
\]
as desired.

Now suppose that the above assumption
fails. Then either for each $x\in E$ we have 
$1/3 \leq |Eu(x)-Eu_{B_\eps(x, R)}|$, 
or else for each $y\in F$ we have $1/3\le |Eu(y)-Eu_{B_\eps(y, 3R)}|$. Suppose now that for each
$x\in E$ we have 
\[
\frac13\le |Eu(x)-Eu_{B_\eps(x,R)}|.
\]
Set $\tau := \frac{s+p-(Q+\beta/\eps)}{p}$; note that $\tau>0$. Then since $x$ is a Lebesgue point of $Eu$, we have
\begin{align*}
	C(\tau)\sum_{j=0}^{\infty}2^{-i\tau} & \lesssim \sum_{j=0}^{\infty}|Eu_{B_{j}(x)}-Eu_{B_{j+1}(x)}|
	  \lesssim  \sum_{j=0}^{\infty}2^{-j}R\biggl(\vint_{B_{j}(x)} g_{Eu}^p \dmu_\beta\biggr)^{1/p}\\
	&\lesssim \sum_{j=0}^{\infty}{(2^{-j}R)^{1-(Q+\beta/\eps)/p}}\biggl(\int_{B_{j}(x)} g_{Eu}^p \dmu_\beta\biggr)^{1/p},
\end{align*}
where $B_{j}(x):=B(x, 2^{-j}R)$. Here we also
used the fact that for balls $B_\eps(x,\rho)$ with $x\in Z$ and
$0<\rho\le \diam(Z)$ we have
$\mu_\beta(B_\eps(x,\rho)\simeq \rho^{Q+\beta/\eps}$.
Hence there exists $j_{x} \in \mathbb{N}\cup \{0\}$ such that
\begin{equation}\label{eq:Loewner-2-beginning}
	2^{-j_{x}\tau p} \lesssim (2^{-j_x}R)^{p-(Q+\beta/\eps)} \int_{B_{j_{x}}(x)} g_{Eu}^p \dmu_\beta.
\end{equation}
The above inequality, together with 
our choice of
$\tau$, gives
\[
	2^{-j_{x}s} \lesssim R^{p-(Q+\beta/\eps)}\int_{B_{j_{x}}(x)} g_{Eu}^p \dmu_\beta.
\]
By the $5$-covering Lemma~\cite{Heinonen}
there exists a countable pairwise disjoint family of balls  
$\{B(x_k,  2^{-j_{x_k}}R)\}_{k \in \mathbb{N}}$ such that
\[
E \subseteq \bigcup_{k} B(x_k, 2^{-j_{x_k}}5R)
\]
and 
\begin{equation}\label{local estimate 1}
	2^{-j_{x_k}s} \lesssim R^{p-(Q+\beta/\eps)}\int_{B_{j_{x_k}}(x_k)} g_{Eu}^p \dmu_\beta.
\end{equation}
Hence, by \eqref{local estimate 1} and the pairwise disjointness property, we have
 \[
 \mathcal{H}^s_\infty(E)\leq C \sum_{k=1}^{\infty} (2^{-j_{x_k}}R))^s 
   \lesssim R^{s+p-(Q+\beta/\eps)} \int_{B(x_{0},4R)}g_{Eu}^p \dmu_\beta.
 \]

A similar argument shows that if 
for each $y\in F$ we have
 \[
 \frac13\le |Eu(y)-Eu_{B(y, 3R)}|,
 \]
then
\[
\mathcal{H}^s_\infty(F)  
\lesssim R^{s+p-(Q+\beta/\eps)} \int_{B(x_{0}, 4R)}g_{Eu}^p \dmu_\beta.
\]
Combining the two possibilities 
and applying the identity $\theta p = p -\beta/\eps$, we see that
\begin{equation}\label{eq:Loewner-2}
\frac{\mathcal{H}^s_\infty(E) \wedge \mathcal{H}^s_\infty(F)}{R^{s-Q+\theta p}} 
\lesssim \int_{B(x_{0},4R)}g_{Eu}^p \dmu_\beta
\end{equation}
as desired.

The proof is completed by 
first recalling from~\eqref{eq:norm-gEu} that 
$\int_{X_\eps} g_{Eu}^p\, d\mu_\beta \lesssim \Vert u\Vert_{B^\theta_{p,p}(Z)}$, and then
taking the infimum over all capacity test functions $u$
in the above two cases.
\end{proof}

If $E$ and $F$ are connected sets and $s=1$, then $\mathcal{H}^s_\infty(E)\simeq\diam(E)$ and
$\mathcal{H}^s_\infty(F)\simeq\diam(E)$. If they are not necessarily connected but
$\nu(E)>0$ and $\nu(F)>0$, then for each $0<s<Q$ we have that
$\mathcal{H}^s_\infty(E)\ge \nu(E)\, R^{s-Q}$ and $\mathcal{H}^s_\infty(F)\ge\nu(F)\, R^{s-Q}$.

\section{$B^{\theta}_{p,p}$-morphisms and quasisymmetric maps}\label{Sec:QC}

From~\cite[Theorem~1.1]{Cp} it is known that there is a correspondence between quasisymmetric mappings between
two Ahlfors regular compact metric spaces and certain classes of weights 
on the hyperbolic fillings of either of the metric spaces.
The perspective of~\cite{KKSS, KYZ} is different in that unlike~\cite{Cp}, they consider impact of quasisymmetric mappings
on the relevant Besov classes of functions on the metric spaces themselves.

In this section, we extend the theory from~\cite{KKSS} to Ahlfors regular spaces which do not
support any Poincar\'e inequalities, see Theorem~\ref{thm:QS-Besov} below. 
We begin by recalling the definitions of quasisymmetry.

\begin{definition}
Let $(Z,d_Z)$ and $(W,d_W)$ be metric spaces.
\begin{itemize}
\item[(a)] A homeomorphism $\pip:Z\to W$ is a \emph{quasisymmetric map}
if there is a continuous monotone increasing function $\eta:[0,\infty)\to[0,\infty)$ with $\eta(0)=0$ and $\eta(t)>0$ when
$t>0$, such that for each triple of points $x,y,z\in Z$ we have
\[
\frac{d_W(\pip(x),\pip(z))}{d_W(\pip(x),\pip(y))}\le \eta\left(\frac{d_Z(x,z)}{d_Z(x,y)}\right).
\]

\item[(b)] A homeomorphism $\pip:Z\to W$ is \emph{weakly quasisymmetric} if there is some $H>0$ such that
for each triple of points $x,y,z\in Z$ we have
\[
\frac{d_W(\pip(x),\pip(z))}{d_W(\pip(x),\pip(y))}\le H\ \text{ whenever }\ 
\frac{d_Z(x,z)}{d_Z(x,y)}\le 1.
\]
In addition, $\pip$ is \emph{uniformly locally weakly quasisymmetric} if there is some $\rho>0$ such that the restriction of $\pip$ to balls in $Z$ of radii at most $\rho$ are weakly quasisymmetric with the same constant $H$.
\end{itemize}
\end{definition}

\begin{remark}\label{rmk:loc-qs}
From~\cite[Theorem~10.19]{Heinonen} we know that if both $Z$ and $W$ are connected doubling metric spaces,
then weak quasisymmetry is equivalent to quasisymmetry. 
Moreover, the proof given there works even if $\pip$ is only
known to be uniformly locally weakly quasisymmetric;
this is seen as follows.

From uniformly locally weak quasisymmetry, together with the connectendess
property, we know that
the homeomorphism is promoted to uniformly local quasisymmetry; that is, there is some $r_0>0$ such that whenever
$x,y,z\in Z$ are three distinct points such that $\diam\{x,y,z\}\le r_0$, we have
\[
\frac{d_W(\pip(x),\pip(y))}{d_W(\pip(x),\pip(z))}\le \eta\left(\frac{d_Z(x,y)}{d_Z(x,z)}\right).
\]
Since $Y$ is compact and $\pip^{-1}$ is continuous, it follows that there is some $\kappa>0$ such that for all $x,y\in X$ we have that
$d_W(\pip(x),\pip(y))\ge \kappa$ whenever $d_Z(x,y)\ge r_0/4$. If $x,y,z\in X$ are three distinct points such that 
$d_Z(x,z)\le r_0/2$ and $d_Z(x,y)>r_0/2$, then by the connectedness property of $Z$ we can find $w_0\in Z$ such that
$d_Z(x,y_0)=r_0/2$, and so by the monotonicity of the quasisymmetry gauge $\eta$,
\begin{align*}
\frac{d_W(\pip(x),\pip(y))}{d_W(\pip(x),\pip(z))}
  &=\frac{d_W(\pip(x),\pip(y))}{d_W(\pip(x),\pip(y_0))}\ \frac{d_W(\pip(x),\pip(z))}{d_W(\pip(x),\pip(y_0))}\\
  &\le \frac{\diam(W)}{\kappa}\ \eta\left(\frac{d_Z(x,y_0)}{d_Z(x,z)}\right)
  \le \frac{\diam(W)}{\kappa}\ \eta\left(\frac{d_Z(x,y)}{d_Z(x,z)}\right).
\end{align*}
Moreover,
\begin{align*}
\frac{d_W(\pip(x),\pip(z))}{d_W(\pip(x),\pip(y))}
  &=\frac{d_W(\pip(x),\pip(z))}{d_W(\pip(x),\pip(y_0))}\ \frac{d_W(\pip(x),\pip(y_0))}{d_W(\pip(x),\pip(y))}\\
  &\le \frac{\diam(W)}{\kappa}\ \eta\left(\frac{d_Z(x,z)}{d_Z(x,y_0)}\right)\\
  &\le \frac{\diam(W)}{\kappa}\ \eta\left(\frac{d_Z(x,z)}{d_Z(x,y)}\frac{d_Z(x,y)}{d_Z(x,y_0)}\right)\\
  &\le \frac{\diam(W)}{\kappa}\ \eta\left(\frac{2\diam(X)}{r_0}\frac{d_Z(x,z)}{d_Z(x,y)}\right).
\end{align*}
Finally, if $d_Z(x,y)\ge r_0/2$ and $d_Z(x,z)\ge r_0/2$, then by the monotonicity of $\eta$ again,
\[
\frac{d_W(\pip(x),\pip(z))}{d_W(\pip(x),\pip(y))}
 =\frac{\diam(W)}{\kappa}
 \le \frac{\diam(W)}{\kappa}\, \frac{\eta\left(\frac{d_Z(x,z)}{d_Z(x,y)}\right)}{\eta\left(\frac{r_0}{2\diam(Z)}\right)}.
\]
It follows that $\pip$ is globally quasisymmetric as well, with quasisymmetry gauge $\widehat{\eta}$ given by
\[
\widehat{\eta}(t)=\max\bigg\lbrace\eta(t),\tfrac{\diam(W)}{\kappa}\eta(t), \tfrac{\diam(W)}{\kappa}\eta(\tfrac{2\diam(Z)}{r_0}t),
   \tfrac{\diam(W)}{\kappa\, \eta\left(\tfrac{r_0}{2\diam(Z)}\right)}\, \eta(t)\bigg\rbrace.
\]
\end{remark}

\begin{theorem}\label{thm:QS-Besov}
Assume that $(Z,d_Z,\nu_Z)$ and $(W,d_W,\nu_W)$
are compact metric measure spaces, with 
$\nu_Z$ Ahlfors $Q_Z$-regular and $\nu_W$ Ahlfors $Q_W$-regular
for some $Q_Z,Q_W>0$.
Suppose that a homeomorphism $\pip:Z\to W$ induces a bounded linear operator
$\pip_\#:B^{\theta_W}_{p,p}(W)\to B^{\theta_Z}_{p,p}(Z)$, that is, 
there is a constant $C_\pip>0$ such that whenever
$f\in B^{\theta_W}_{p,p}(W)$ we have that $f\circ\pip\in B^{\theta_Z}_{p,p}(Z)$ with
\[
\Vert f\circ\pip\Vert_{B^{\theta_Z}_{p,p}(Z)}
  \le C\, \Vert f\Vert_{B^{\theta_W}_{p,p}(W)},
\]
where $\theta_Z=Q_Z/p$ and $\theta_W\le Q_W/p$.
Suppose in addition that $W$ is linearly locally path-connected, that is, 
there is a constant $C_L>1$ such that given $w\in W$, $0<r<\diam(W)$, and 
$w_1,w_2\in B(w,r)\setminus B(w,r/2)$ there is a path $\gamma$ in $B(w,C_Lr)\setminus B(w,r/C_L)$ with 
end points $w_1, w_2$. Then $\pip$ is a quasisymmetric map.
\end{theorem}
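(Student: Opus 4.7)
The plan is to argue by contradiction, playing the upper capacity bound of Theorem~\ref{thm:Ahlfors-Annulus} in $W$ against the lower bound of Theorem~\ref{thm:Loewner} in $Z$, with the Besov morphism property mediating between them. Since $\nu_Z$ and $\nu_W$ are Ahlfors regular (so both spaces are doubling), $W$ is connected (as an LLC space), and $\pip$ is a homeomorphism, $Z$ is also connected and doubling. Hence Remark~\ref{rmk:loc-qs} applies, and it suffices to show that $\pip$ is uniformly locally weakly quasisymmetric. Negating this yields sequences $(x_n,y_n,z_n)\in Z^3$ with $d_Z(x_n,z_n)\le d_Z(x_n,y_n)\to 0$ and $R_n/r_n\to\infty$, where $r_n:=d_W(\pip(x_n),\pip(y_n))$ and $R_n:=d_W(\pip(x_n),\pip(z_n))$. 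Continuity of $\pip$ forces $r_n,R_n\to 0$; by compactness we pass to a subsequence with $x_n,y_n,z_n\to x_\infty$.

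Next I would choose the intermediate scale $\rho_n=\sqrt{r_n R_n}$, so that both $\rho_n/r_n$ and $R_n/\rho_n$ tend to infinity, and apply Theorem~\ref{thm:Ahlfors-Annulus} to the annular condenser $(\overline{B}_W(\pip(x_n),r_n),W\setminus B_W(\pip(x_n),\rho_n))$ in $W$. The hypothesis $\theta_W\le Q_W/p$ places us in either Case 2, with the bound $\log(\rho_n/r_n)^{1-p}$, or Case 3, with the bound $r_n^{Q_W-\theta_W p}$; both tend to zero. The proof of Theorem~\ref{thm:Ahlfors-Annulus} exhibits explicit (Lipschitz or truncated-logarithmic) test functions $u_n\in B^{\theta_W}_{p,p}(W)$ realizing these bounds up to constants. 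The hypothesis on $\pip_{\#}$ then gives $v_n:=u_n\circ\pip\in B^{\theta_Z}_{p,p}(Z)$ with $\Vert v_n\Vert_{B^{\theta_Z}_{p,p}(Z)}^p\to 0$; since $v_n\equiv 1$ on $E_n:=\pip^{-1}(\overline{B}_W(\pip(x_n),r_n))\ni x_n,y_n$ and $v_n\equiv 0$ on $F_n:=\pip^{-1}(W\setminus B_W(\pip(x_n),\rho_n))\ni z_n$, we conclude $\cp_{B^{\theta_Z}_{p,p}(Z)}(E_n,F_n)\to 0$.

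To contradict this, I would apply Theorem~\ref{thm:Loewner} in $Z$. With $\theta_Z p=Q_Z$, the bound reads, for $s\in(0,Q_Z)$ and $E_n,F_n\subset B_Z(x_n,R)$,
\[
\cp_{B^{\theta_Z}_{p,p}(Z)}(E_n,F_n)\gtrsim \frac{\mathcal{H}^s_\infty(E_n)\wedge \mathcal{H}^s_\infty(F_n)}{R^s}.
\]
Here the LLC hypothesis on $W$ is essential. By iterating LLC shells outward from $\pip(z_n)$, keeping paths outside $B_W(\pip(x_n),\rho_n)$ (which is possible because $\rho_n/R_n\to 0$), one builds a continuum $\Gamma_n\subset F_n$ connecting $\pip(z_n)$ to a point at $W$-distance of order $\diam(W)$ from $\pip(x_n)$. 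A quantitative injectivity argument, namely that $\inf\{d_Z(a,b):d_W(\pip(a),\pip(b))\ge c\}>0$ for each fixed $c>0$ by compactness of $Z$ and continuity of $\pip$, then shows that the continuum $\pip^{-1}(\Gamma_n)\subset F_n$ in $Z$ has diameter bounded below uniformly in $n$. Since $W$-balls (after slight LLC enlargement) are connected, $E_n$ is also a continuum in $Z$ containing $x_n$ and $y_n$, giving $\mathcal{H}^s_\infty(E_n)\gtrsim d_Z(x_n,y_n)^s$ for $s\le 1$.

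The main obstacle is the scale-matching needed to turn these lower bounds into a contradiction: because the $\Gamma_n$ construction places $F_n$-continua at large $Z$-scales while $E_n$ sits at the vanishing scale $d_Z(x_n,y_n)$, the natural containing radius $R$ is of order $\diam(Z)$, reducing the Theorem~\ref{thm:Loewner} lower bound to $\gtrsim d_Z(x_n,y_n)^s$, which itself tends to zero. The essential step is therefore to refine the LLC construction so that both $E_n$- and $F_n$-continua sit at matched scales inside a $Z$-ball whose radius is comparable to their diameters, making $\mathcal{H}^s_\infty/R^s$ a uniform positive constant; concretely, by building a $W$-continuum inside the annulus $B_W(\pip(x_n),R_n)\setminus B_W(\pip(x_n),\rho_n)$ of $W$-diameter $\sim R_n$ and using the compactness-based comparison of moduli of continuity of $\pip$ and $\pip^{-1}$ to locate both pulled-back continua in a single small $Z$-ball. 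This delicate interplay between the Ahlfors $Q_Z$-regular content structure on $Z$, the LLC of $W$, and the sharp capacity decay from Theorem~\ref{thm:Ahlfors-Annulus} is the crux of the argument.
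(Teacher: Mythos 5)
Your overall architecture---reduce to uniform local weak quasisymmetry via Remark~\ref{rmk:loc-qs}, bound an annular condenser around $\pip(x_n)$ in $W$ from above by Theorem~\ref{thm:Ahlfors-Annulus}, transfer to $Z$ by the morphism property, and contradict the Loewner-type lower bound of Theorem~\ref{thm:Loewner}---is exactly the paper's. But the write-up stops short at the decisive step: you leave the lower bound $\cp_{B^{\theta_Z}_{p,p}(Z)}(E_n,F_n)\gtrsim 1$ as an unresolved ``crux,'' and the repair you sketch (a $W$-continuum of diameter $\sim R_n$ plus a comparison of the moduli of continuity of $\pip$ and $\pip^{-1}$) is vaguer and harder than what is needed. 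The scale-matching difficulty you describe is a red herring created by your own bookkeeping: in your normalization the point $z_n$, whose image is the \emph{far} point in $W$, satisfies $d_Z(x_n,z_n)\le d_Z(x_n,y_n)=:d_n$, so it already sits at the \emph{small} scale in $Z$. Hence the continuum in $F_n$ issued from $z_n$ (which reaches, by uniform continuity of $\pip$, a point at $Z$-distance at least some fixed $\delta_0>0$ from $x_n$) and the continuum in $E_n$ joining $x_n$ to $y_n$ \emph{both} meet $B_Z(x_n,2d_n)$ in sets of $\mathcal{H}^1_\infty$-content at least $d_n$ (use that $d_Z(x_n,\cdot)$ is $1$-Lipschitz with connected image on each continuum). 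By the monotonicity of the relative capacity you may replace $E_n,F_n$ by these intersections, and Theorem~\ref{thm:Loewner} with $s=1$, $R\simeq d_n$ and $\theta_Z p=Q_Z$ gives $\cp_{B^{\theta_Z}_{p,p}(Z)}(E_n,F_n)\gtrsim d_n/d_n=1$ uniformly in $n$, which is the desired contradiction. This is precisely how the paper closes the argument: it intersects the pulled-back continua with $B(x,2r)$ before invoking Theorem~\ref{thm:Loewner}.

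Two smaller points you should also tighten. First, $E_n=\pip^{-1}(\overline{B}_W(\pip(x_n),r_n))$ need not be connected, since metric balls in $W$ need not be; as in the paper, take $E_n$ and $F_n$ to be preimages of explicit curves produced by the linear local path-connectedness of $W$ (one joining $\pip(x_n)$ to $\pip(y_n)$ inside $B_W(\pip(x_n),C_Lr_n)$, one joining $\pip(z_n)$ to a point at $W$-distance comparable to $\diam(W)$ while staying outside $B_W(\pip(x_n),\rho_n/C_L)$), and enlarge or shrink the radii of the annular condenser by factors of $C_L$ so that the test functions of Theorem~\ref{thm:Ahlfors-Annulus} are still identically $1$ and $0$ on neighborhoods of these curves. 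Second, the application of Theorem~\ref{thm:Loewner} with $s=1$ silently requires $Q_Z>1$ (and $\theta_Z=Q_Z/p<1$ forces $p>Q_Z$); this is implicit in the paper's proof as well, so it is not a defect specific to your argument, but it is worth flagging.
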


Here we should be careful in stating what $f\circ\pip$ is, as it may be the case that $\pip$ pulls back a set of $\nu_W$-measure zero
to a set of positive $\nu_Z$-measure. Instead, we here require that we only consider the Besov quasicontinuous $f$ in
looking at $f\circ\pip$. Such quasicontinuous representatives of functions in $B^{\theta_W}_{p,p}(W)$ (which are,
strictly speaking, equivalence classes of functions) are guaranteed to exit, thanks to the results in~\cite{B2S}.

The argument below is very similar to that of~\cite{HK} where both the metric measure spaces
are assumed to be connected and uniformly locally Ahlfors $Q$-regular, 
and to support a uniformly local $Q$-Poincar\'e inequality.

\begin{proof}[Proof of Theorem~\ref{thm:QS-Besov}]
Since $W$ is connected, therefore $Z$ is also connected, and so 
by Remark~\ref{rmk:loc-qs},
it suffices to show that $\pip$ is uniformly locally weakly
quasisymmetric. 

Let $\pip$ be as in the statement of the theorem. Since
$\pip$ is continuous on the compact space $Z$, it is uniformly continuous. Hence
we can find $R_0>0$ such that whenever $x_1,x_2\in Z$ with $d(x_1,x_2)\le R_0$ we have that 
$d(\pip(x_1),\pip(x_2))<\diam(W)/10C_L^4$. By choosing $R_0$ small, we can also ensure that
$R_0\le \diam(Z)/10$.

We fix $x\in Z$ and consider $y,z\in Z$ such that $r:=d(x,y)\le d(x,z)=:R< R_0$. 
We wish to find an upper bound for
$d(\pip(x),\pip(y))/d(\pip(x),\pip(z))$. Set $L=d(\pip(x),\pip(y))$ and $l=d(\pip(x),\pip(z))$. If $L\le 4C_L^2l$, then we have
a bound in terms of $4C_L^2$. So suppose that $L>4C_L^2l$. By the choice of $R_0$ we can find $w\in W$ such that
$d(\pip(x),w)>C_L^2L$; then $d(\pip^{-1}(w),x)>R_0$. 

Let $E, F\subset W$ such that $E$ is a curve in $W\setminus B(\pip(x), 2C_Ll)$ with end points $w,\pip(y)$ and
$F$ is a curve in $B(\pip(x),C_Ll)$ with end points $\pip(x), \pip(z)$; these curves are guaranteed by the linear
local path-connectedness of $W$. Then by Theorem~\ref{thm:Ahlfors-Annulus},
\begin{align*}
\cp_{B^{\theta_W}_{p,p}(W)}(E,F)\le 
\cp_{B^{\theta_W}_{p,p}(W)}(B(\pip(x),C_Ll),W\setminus B(\pip(x), L/C_L))
\le C\, \log(L/(C_L^2l))^{\beta_p}
\end{align*}
where $\beta_p\in \{-p,1-p\}$.
By the assumed morphism property of $\pip$, it follows that
\[
\cp_{B^{\theta_Z}_{p,p}(Z)}(E',F')\le C\, \log(L/(C_L^2l))^{\beta_p},
\]
where $E'=\pip^{-1}(E)$ and $F'=\pip^{-1}(F)$. On the other hand, both $E'$ and $F'$ are connected subsets of
$\pip^{-1}(B(\pip(x),C_Ll))$ and $\pip^{-1}(W\setminus B(\pip(x), L/C_L))$. Moreover, $F'$ contains both $x$ and $z$,
while $E'$ contains both $y$ and $w$. It follows that 
\[
\min\{\mathcal{H}^1_\infty(E'\cap B(x,2r)),\mathcal{H}^1_\infty(F'\cap B(x,2r))\}\ge r, 
\]
and so
by Theorem~\ref{thm:Loewner} and the assumption $\theta_Zp=Q_Z$, we have 
\[
\cp_{B^{\theta_Z}_{p,p}(Z)}(E',F')\ge 1/C.
\]
It follows that 
$\log(L/l)^{p-1}\le C$, that is,
\[
L\le e^{C^{-1/\beta_p}}\, l,
\]
as desired.
\end{proof}

From~\cite{KKSS} we know that if $Q_Z=Q_W$ and $Z$ supports a $Q$-Poincar\'e inequality, then every quasiconformal map
$\pip: Z\to W$ is also a $B^{Q/p}_{p,p}$-morphism, that is,
$\pip$ induces a bounded linear operator $\pip_\#:B^{\theta_W}_{p,p}(W)\to B^{\theta_Z}_{p,p}(Z)$.
In our setting we do not know whether this converse of Theorem~\ref{thm:QS-Besov}
holds even if the quasiconfomal map is a quasisymmetric map. The principal
stumbling block in this case is our lack of knowledge of absolute continuity of the pull-back measure with respect to the 
underlying measure, namely, whether $\pip_\#\nu_W$ is absolutely continuous with respect to $\nu_Z$, with appropriate
integrability conditions of the Jacobian. It is however possible that even 
with such lack of absolute continuity 
we do obtain a morphism if, perhaps, we focus on quasicontinuous representative Besov functions.

\vskip .3cm

\noindent {\bf Address:}\\

\noindent {\bf J.L.}: University of Jyvaskyla, Department of Mathematics and Statistics, P.~O.~Box 35, FI-40014 University of Jyvaskyla, Finland.\\

\noindent {\bf E-mail address:} {\tt juha.lehrback@jyu.fi}\\

\noindent {\bf N.S.}: Department of Mathematical Sciences, P.~O.~Box 210025, University of Cincinnati, Cincinnati, 
OH~45221-0025, U.S.A.\\

\noindent {\bf E-mail address:} {\tt shanmun@uc.edu}

\end{document}